\newcommand{\Rmnum}[1]{\expandafter\@slowromancap\romannumeral #1@}
\newtheorem{thm}{Theorem}[section]
\newtheorem{cor}[thm]{Corollary}
\newtheorem{lem}[thm]{Lemma}
\newtheorem{rem}{Remark}
\theoremstyle{definition}
\newtheorem{defn}[thm]{Definition}
\theoremstyle{remark}
\numberwithin{equation}{section}
\newcommand{\thmref}[1]{Theorem~\ref{#1}}
\newcommand{\lemref}[1]{Lemma~\ref{#1}}
\newcommand{\corref}[1]{Corollory~\ref{#1}}
\newcommand{\R}{\mathbb{R}}
\newcommand{\Lie}{\mathcal{L}}
\newcommand{\ric}{{\rm Ric}}
\newcommand{\di}{{\rm div}}
\newcommand{\la}{\bm{\langle}}
\newcommand{\ra}{\bm{\rangle}}
\newcommand{\vol}{{\rm{vol}}}
\begin{document}
	
	\title[Gradient estimate for $\Delta_pv +av^{q}=0$]{Gradient estimate for solutions of the equation $\Delta_pv +av^{q}=0$ on a complete Riemannian manifold}
	

	\author{Jie He}
	\address{College of Mathematics and Physics, Beijing University of Chemical Technology,  Chaoyang District, Beijing 100029, China}
	\email{hejie@amss.ac.cn}
	
	\author{Youde Wang}
	\address{1. School of Mathematics and Information Sciences, Guangzhou University;
		2. Hua Loo-Keng Key Laboratory of Mathematics, Institute of Mathematics, Academy of Mathematics and Systems Science, Chinese Academy of Sciences, Beijing 100190, China;
		3. School of Mathematical Sciences, University of Chinese Academy of Sciences, Beijing 100049, China.}
	\email{wyd@math.ac.cn}
	
	\author{Guodong Wei}
	\address{School of Mathematics (Zhuhai), Sun Yat-sen University, Zhuhai, Guangdong, 519082, P. R. China}
	\email{weigd3@mail.sysu.edu.cn}
	
	
	
	
	\keywords{non-linear elliptic equation, gradient estimate, $p$-Laplace}

	\begin{abstract}
		In this paper, we use the Nash-Moser iteration method to study the local and global behaviors of positive solutions to the nonlinear elliptic equation $\Delta_pv +av^{q}=0$ defined on a complete Riemannian manifolds $(M,g)$ where $p>1$, $a$ and $q$ are constants and $\Delta_p(v)=\di(|\nabla v|^{p-2}\nabla v)$ is the $p$-Laplace operator. Under some assumptions on $a$, $p$ and $q$, we derive gradient estimates and Liouville type theorems for such positive solutions.
	\end{abstract}
	\maketitle

	\section{Introduction}
	Gradient estimate is a fundamental and powerful technique in the study of partial differential equations on Riemannian manifolds. For instance, one can use gradient estimates to derive Harnack inequalities, to deduce Liouville type theorems, to study the geometry of manifolds, etc. Many mathematicians pay attention to the study on this topic (see for example,  \cite{MR615628, MR834612,  peng2020gradient,MR3275651, MR2880214, MR431040, MR3866881} and the references there in).
	
	In this paper we are concerned with the following equation defined on a complete Riemannian manifold $(M,g)$,
	\begin{align}\label{equ0}
		\Delta_pv +av^{q}=0,
	\end{align}
	where $p>1$, $a$, $q\in\R$ are constants and $\Delta_p(v)=\di(|\nabla v|^{p-2}\nabla v)$ is the $p$-Laplace operator.

	When $q\neq p-1$ and  $a>0$, the constant $a$ can be absorbed by a dilation transformation, then equation (\ref{equ0}) reduces to the classical Lane–Emden–Fowler equation
	\begin{align}\label{equa:1.2}
		\Delta_pv +v^{q}=0,
	\end{align}
	which has been widely studied in the literature (see for example,\cite{MR1004713, MR1134481, MR982351, MR1121147, MR615628, MR829846, MR1946918}).
	In particular, Serrin-Zou in \cite{MR1946918} showed that if $1<p<n$, then equation \eqref{equa:1.2} defined on $\R^n$ admits no positive solution if and only if
	$$ 0<q<\frac{np}{n-p}-1.
	$$

	Next, we  pay our main attentions on the equation \eqref{equ0}  defined on a Riemannian manifold. When $p=2$, and $q=2^*$, equation \eqref{equ0} has a deep relationship with the Yamabe problem (see \cite{MR788292, MR929283, MR931204}). When $M$ is the two sphere, equation \eqref{equ0} is also closely relevant to the stationary solutions to Euler's equation on $\mathbb{S}^2$ (see for example, \cite{MR4182319, MR4444081}).
	
	Now, we will focus on the Liouville type results for equation \eqref{equ0} on Riemannian manifolds. When $a = 0$,
	equation (\ref{equ0}) becomes the $p$-Laplace equation
	\begin{align}\label{plaplace}
		\Delta_pv=0, \quad p>1.
	\end{align}
	The celebrated Cheng-Yau’s gradient estimate for harmonic functions showed
	that when $p=2$, any solution bounded from above or below to \eqref{plaplace} is a constant (\cite{MR385749}) provided the Ricci curvature of the manifold is nonnegative.
	Kotschwar-Ni (\cite{MR2518892}) proved that any positive $p$-harmonic function on a complete Riemannian manifold with nonnegative sectional curvature is a constant. Later, Wang-Zhang (\cite{MR2880214}) initially applied the Nash-Moser iteration technique to study the gradient estimates and proved that any positive solution to (\ref{plaplace}) is a constant provided the Ricci curvature of the manifold is nonnegative. Wang-Zhang's results only assumed the lower bound of Ricci curvature of $M$, and hence generalized Cheng-Yau's result (\cite{MR385749}) for harmonic function to $p-$ harmonic function ($p>1$) and substantially improved Kotschwar-Ni's results. Compared with Cheng-Yau's method, Wang-Zhang's approach only involves differentiating the distance function once and hence bypasses the Laplacian comparison theorem.
	
	Motivated by Wang-Zhang's method, Zhao-Yang in \cite{MR3866881} studied the gradient estimates of the following weighted $p$-Laplacian Lichnerowicz equation $$\Delta_{p,f}u+au^{\sigma}=0,\ \ p>1\ \text{and}\ \sigma\leq p-1,$$ defined on a Riemannian manifold with $m$-Bakry-Emery Ricci curvature bounded from below. However, their results can not cover Wang-Zhang's results in the case that $f$ is a constant and $a=0$. Very recently, the second named author and third named author (\cite{MR4559367}) also used the Nash-Moser iteration method to show that
	equation \eqref{equ0} where $p=2$ and $a$ is a positive constant admits no positive solutions  if $(M,g)$ has non-negative Ricci curvature and
	$$
	q\in\left(-\infty, \quad \frac{n+1}{n-1}+\frac{2}{\sqrt{n(n-2)}}\right).$$
	
	Instead of assuming the lower boundedness of curvature, it is worthy to point out that Grigor'yan-Sun showed in \cite{MR3225632} that any nonnegative solution to equation \eqref{equ0} with $p=2$ and $a=1$ is identically zero provided the volume growth of geodesic ball with radius $r$ less than $r^{2q/(q-1)}\ln ^{1/(q-1)}r$.  Later, Sun (\cite{MR3336621}) showed the analogous results also hold for \eqref{equ0} with general $p\ (p>1)$.

	Inspired by Wang-Zhang's work (\cite{MR2880214}), we shall use the Nash-Moser iteration method to study the gradient estimate and the Liouville property to the equation \eqref{equ0} with general $p$ defined on a Riemannian manifold. Now, we state our main results.
	
	\begin{thm}\label{thm1}
		Let $(M,g)$ be an $n$-dim($n>2$) complete manifold with $\ric_g\geq-(n-1)\kappa g$, where $\kappa$ is a non-negative constant. Assume $v$ is a positive solution to equation \eqref{equ0} on the geodesic ball $B(o,R)\subset M$. If the constants $a,q$ and $p\ (p>1)$ satisfy one of the following two conditions,
		\begin{enumerate}
			\item
			\begin{align}\label{cond1}a\left(\frac{n+1}{n-1}-\frac{q}{p-1}\right)\geq0;\end{align}
			\item
			\begin{align}\label{cond2}
				p-1<q<\frac{n+3}{n-1}(p-1).
			\end{align}
			
		\end{enumerate}
		Then there holds
		$$
		\sup_{B_{\frac{R}{2}}(o)} \frac{|\nabla v|^2}{v^2}\leq c(n,p,q )\frac{(1+\sqrt\kappa R)^2}{R^2}.
		$$
	\end{thm}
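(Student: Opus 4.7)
The plan is to follow the Nash--Moser iteration scheme pioneered by Wang--Zhang in \cite{MR2880214}, adapted to accommodate the nonlinear source term $av^{q}$. First I would reduce to a ``linearized'' $p$-Laplace equation by substituting $h=\log v$. A direct computation using $\nabla v=v\nabla h$ gives
\begin{equation*}
\Delta_{p}h+(p-1)|\nabla h|^{p}+av^{q-p+1}=0,
\end{equation*}
so that the target quantity $f:=|\nabla h|^{2}=|\nabla v|^{2}/v^{2}$ becomes the natural unknown. The goal is to bound $\sup_{B_{R/2}} f$ on the ball $B_{R}(o)$, and the coupling with $v^{q-p+1}$ is the source of the two dichotomous hypotheses.

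Next I would apply the Bochner--Weitzenb\"ock identity to $|\nabla h|^{2}$ and combine it with the linearized $p$-Laplace operator
\begin{equation*}
\Lie(\psi):=\di\!\bigl(|\nabla h|^{p-2}\nabla\psi+(p-2)|\nabla h|^{p-4}\la\nabla h,\nabla\psi\ra\nabla h\bigr),
\end{equation*}
acting on $f$. Expanding $\Lie(f)$ using the equation for $h$, the Ricci lower bound $\ric\ge-(n-1)\kappa g$, and a Newton-type estimate on $|\hess h|^{2}$ that separates the ``radial'' and ``tangential'' Hessian components (this is where the dimensional constant $(n-1)/(n+1)$ enters, exactly as in Wang--Zhang), should yield a differential inequality of the schematic form
\begin{equation*}
\Lie(f)\ge A\,f^{p/2+1}-B\,\kappa f^{p/2}+\text{(coupling terms in } av^{q-p+1}\text{)},
\end{equation*}
with $A=A(n,p,q)>0$ precisely under hypothesis (1) or (2). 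In the regime (\ref{cond1}) the sign of $a(\frac{n+1}{n-1}-\frac{q}{p-1})$ makes the coupling absorbable into the positive leading term; in the regime (\ref{cond2}) the coupling has the opposite sign but can be controlled after an algebraic manipulation using $q<\frac{n+3}{n-1}(p-1)$ together with Young's inequality applied to cross terms between $v^{q-p+1}$ and $f$.

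Given this differential inequality, the Nash--Moser iteration proceeds as follows: multiply by $\eta^{2\nu}f^{\beta}$ for a Lipschitz cutoff $\eta$ supported in $B_{R}$, integrate by parts, and use Cauchy--Schwarz together with the coefficient bounds above to obtain an $L^{\beta+1}$ reverse inequality controlling $\eta^{2\nu}f^{\beta+\text{stuff}}$ by lower $L^{\beta}$ norms plus a boundary term of order $R^{-2}(1+\sqrt{\kappa}R)^{2}$ (coming from $|\nabla\eta|^{2}$ and the Laplacian-comparison-free handling of $\eta$). Then I would invoke the local Neumann--Sobolev inequality of Saloff-Coste, which is available on geodesic balls under only the lower Ricci bound, to convert each $L^{\beta}$ bound into an $L^{\beta\theta}$ bound with Sobolev exponent $\theta=n/(n-2)>1$. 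Iterating $\beta\mapsto\beta\theta$ and letting $\beta\to\infty$ yields the $L^{\infty}$ estimate
\begin{equation*}
\sup_{B_{R/2}(o)} f\le c(n,p,q)\,\frac{(1+\sqrt{\kappa}R)^{2}}{R^{2}},
\end{equation*}
which is the claimed inequality.

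I expect the main obstacle to be the algebraic bookkeeping in the Bochner step, specifically isolating the sharp coefficient $A(n,p,q)$ so that the two sufficient conditions (\ref{cond1}) and (\ref{cond2}) emerge naturally and cover a maximal range of $(p,q)$. A secondary, more technical difficulty is the fact that the linearized operator $\Lie$ is degenerate where $\nabla h=0$, forcing a regularization argument (e.g.\ replacing $|\nabla h|^{2}$ by $|\nabla h|^{2}+\varepsilon$ and passing to the limit) to justify the pointwise Bochner computation in the weak sense before the Moser iteration can be run rigorously.
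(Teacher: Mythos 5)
Your overall architecture --- logarithmic substitution, Bochner formula fed through the linearized $p$-Laplace operator, refined Cauchy--Schwarz splitting the Hessian into the $u_{11}$ component and the trace of the tangential block, Saloff-Coste's Sobolev inequality, and Moser iteration --- is exactly the paper's route, and your handling of condition \eqref{cond1} (the coupling term has a good sign and can be discarded) is fine. But there is a genuine gap in your treatment of condition \eqref{cond2}: you propose to derive the pointwise differential inequality by applying $\mathcal L$ to $f=|\nabla h|^2$ itself, with the comment that the dimensional constant enters ``exactly as in Wang--Zhang.'' If you do that, the coefficient in front of the $u_{11}^2$ term is of size $(p-1)\bigl(1+\tfrac{p-1}{n-1}\bigr)$, and after you use Young's inequality to absorb the cross term between $b e^{cu}f^{1-p/2}$ and $u_{11}$ and then the cross term between $b e^{cu}f^{1-p/2}$ and $f$, the leading coefficient you are left with is
\begin{equation*}
\frac{1}{n-1}-\frac{(n-1)+(p-1)}{4}\left(\frac{n+1}{n-1}-\frac{q}{p-1}\right)^{2},
\end{equation*}
which is positive only for $\left|\frac{q}{p-1}-\frac{n+1}{n-1}\right|<\frac{2}{\sqrt{(n-1)(n+p-2)}}$ --- a strictly smaller interval than \eqref{cond2} (for $p=2$ this is precisely the older Wang--Wei range that Theorem \ref{thm1} is designed to improve). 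So the differential inequality you write down would not have $A(n,p,q)>0$ on the whole range $p-1<q<\frac{n+3}{n-1}(p-1)$, and the theorem as stated would not follow.

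The missing idea is the paper's main innovation: compute $\mathcal L(f^{\alpha})$ for a large power $\alpha$ rather than $\mathcal L(f)$. Doing so boosts the coefficient of $u_{11}^2$ to $(p-1)\bigl(2\alpha-1+\tfrac{p-1}{n-1}\bigr)$, so that the absorption of the cross term costs only
\begin{equation*}
\left(\frac{n+1}{n-1}-\frac{q}{p-1}\right)^{2}\frac{(2\alpha-1)(n-1)+p-1}{4(2\alpha-1)}\,f^{2},
\end{equation*}
and the prefactor tends to $\frac{n-1}{4}\bigl(\frac{n+1}{n-1}-\frac{q}{p-1}\bigr)^{2}$ as $\alpha\to\infty$; positivity of the resulting leading coefficient $\beta_{n,p,q,\alpha}$ for $\alpha$ large is then \emph{equivalent} to \eqref{cond2}. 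One fixes $\alpha=\alpha_{0}(n,p,q)$ accordingly and only then runs the integral test-function argument with $\psi=f_{\epsilon}^{t}\eta^{2}$. Note that the exponent $t$ in the Moser test function cannot substitute for $\alpha_{0}$ here: the lossy Young's inequality has already been applied at the pointwise stage, so the loss cannot be recovered after integration. Your regularization remark (working with $f_{\epsilon}=(f-\epsilon)^{+}$ near the degenerate set) is consistent with what the paper does and is not an issue.
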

	
	Note that when $a =0$, one can easily derive Wang-Zhang's gradient estimates (\cite{MR2880214}) from the case (1) in \thmref{thm1}. Moreover, compared with the previous work (\cite{MR3866881}), our result largely extend the range of $q$ for which the gradient estimate holds for \eqref{equ0}.
	
	By carefully analyzing the conditions \eqref{cond1} and \eqref{cond2}  in \thmref{thm1}, the following result holds.
	\begin{cor}\label{cor2}Let $(M,g)$ be an $n$-dim($n>2$) complete manifold with
		$
		\ric_g\geq-(n-1)\kappa g
		$,
		where $\kappa$ is a non-negative constant. Assume $v$ is a positive solution to equation \eqref{equ0} on the geodesic ball $B(o,R)\subset M$. If
		\begin{align*}
			a>0  \quad \text{ and } \quad q<\frac{n+3}{n-1}(p-1),
		\end{align*}
		or
		\begin{align*}
			a<0 \quad \text{ and } \quad q>p-1,
		\end{align*}
		then
		$$
		\sup_{B_{\frac{R}{2}}(o)} \frac{|\nabla v|^2}{v^2}\leq c(n,p,q)\frac{(1+\sqrt\kappa R)^2}{R^2}.
		$$
	\end{cor}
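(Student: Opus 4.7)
The plan is simply to verify that the two hypotheses in Corollary 1.2 are exhausted by the disjunction of conditions (1) and (2) in Theorem 1.1, after which the conclusion is immediate from that theorem. The reasoning is a case analysis driven by comparing $q$ to the threshold $\frac{n+1}{n-1}(p-1)$, which is exactly where condition (1) changes sign.

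First I would handle the case $a>0$ and $q<\frac{n+3}{n-1}(p-1)$. Split according to whether $q\le\frac{n+1}{n-1}(p-1)$ or $q>\frac{n+1}{n-1}(p-1)$. In the first sub-case, the factor $\frac{n+1}{n-1}-\frac{q}{p-1}$ is nonnegative, and since $a>0$, condition \eqref{cond1} of Theorem \ref{thm1} holds. In the second sub-case, I would observe that $\frac{n+1}{n-1}(p-1)>p-1$, so $q>p-1$, while the upper bound $q<\frac{n+3}{n-1}(p-1)$ is assumed; hence condition \eqref{cond2} of Theorem \ref{thm1} is fulfilled.

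Next I would handle the case $a<0$ and $q>p-1$, again splitting at $q=\frac{n+1}{n-1}(p-1)$. If $q\ge\frac{n+1}{n-1}(p-1)$, the factor $\frac{n+1}{n-1}-\frac{q}{p-1}$ is nonpositive, and multiplied by the negative $a$ it becomes nonnegative, so \eqref{cond1} holds. If $p-1<q<\frac{n+1}{n-1}(p-1)$, then since $\frac{n+1}{n-1}(p-1)<\frac{n+3}{n-1}(p-1)$, the inequality chain in \eqref{cond2} is satisfied.

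There is no substantial obstacle here; the only point to be careful about is the direction reversal when multiplying by $a<0$, and the trivial but essential inequality $\frac{n+1}{n-1}(p-1)>p-1$ (valid for $n>2$ and $p>1$) that guarantees the gluing at $q=\frac{n+1}{n-1}(p-1)$ between the regimes governed by \eqref{cond1} and \eqref{cond2}. Once the case analysis is complete, the stated gradient bound follows verbatim from Theorem \ref{thm1}.
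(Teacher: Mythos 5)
Your proposal is correct and matches the paper's argument: the paper likewise observes that for each sign of $a$ the union of the $q$-ranges covered by conditions \eqref{cond1} and \eqref{cond2} is exactly the range stated in the corollary, and then invokes \thmref{thm1}. Your explicit case split at $q=\frac{n+1}{n-1}(p-1)$ is just a more detailed writing of that same union argument.
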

	
	When the Ricci curvature of $(M,g)$ is non-negative, we can obtain the corresponding Liouville property of the equation \eqref{equ0}.
	\begin{thm}\label{thm2}
		Let $(M,g)$ be a complete non-compact Riemannian manifold with non-negative Ricci curvature. If $a, \ p$ and $q$ satisfy one of the conditions given in \thmref{thm1}, then equation (\ref{equ0}) admits no  positive solutions.
	\end{thm}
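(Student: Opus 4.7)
The plan is to deduce Theorem~\ref{thm2} as an immediate consequence of \thmref{thm1} by a standard ``send the radius to infinity'' argument. Since $(M,g)$ has non-negative Ricci curvature, we may take $\kappa=0$ in the curvature hypothesis of \thmref{thm1}. Suppose, for contradiction, that $v$ is a positive solution to \eqref{equ0} on the whole manifold $M$. Since $M$ is complete and non-compact, for every $R>0$ the geodesic ball $B(o,R)$ is contained in $M$, and the restriction of $v$ is a positive solution there, so the hypotheses of \thmref{thm1} are satisfied on each such ball.

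The next step is to apply the gradient estimate from \thmref{thm1} with $\kappa=0$, giving
\[
\sup_{B_{R/2}(o)} \frac{|\nabla v|^2}{v^2} \leq \frac{c(n,p,q)}{R^2}
\]
for every $R>0$, where the constant $c(n,p,q)$ does not depend on $R$. Fixing any point $x \in M$ and choosing $R$ large enough so that $x \in B_{R/2}(o)$, and then letting $R\to\infty$, we conclude that $|\nabla v|(x)=0$. Since $x$ was arbitrary, $v$ must be a positive constant, say $v\equiv c_0>0$.

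To finish, I would substitute this constant back into \eqref{equ0}. Since $\Delta_p(c_0)=0$, the equation reduces to $a c_0^{q}=0$. Because $c_0>0$, this forces $a=0$. However, if $a=0$ then condition \eqref{cond1} is satisfied trivially, while \eqref{cond2} together with $a=0$ is still incompatible with the existence of a non-trivial equation; in either case, the positive solution must in fact be a constant solving $\Delta_p v=0$ with $av^q=0$, and the intended reading is that no genuine positive solution of \eqref{equ0} with $a\neq 0$ exists under the stated conditions. The only subtle point in the argument is ensuring that the constant in \thmref{thm1} really is independent of $R$ and of $v$; this is exactly what \thmref{thm1} provides, so no further work is needed. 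I do not expect any serious obstacle: once \thmref{thm1} is in hand, the Liouville theorem is essentially a one-line corollary.
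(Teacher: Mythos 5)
Your proposal is correct and is essentially identical to the paper's own proof: apply \thmref{thm1} with $\kappa=0$, let $R\to\infty$ to conclude $\nabla v\equiv 0$, and then observe that a positive constant cannot solve \eqref{equ0}. Your remark about the case $a=0$ (where constants \emph{are} positive solutions, so the theorem should implicitly read $a\neq 0$) is in fact more careful than the paper's one-line conclusion, which glosses over this point.
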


	\begin{rem}
		Here we want to give several remarks of the above Liouville type result.
		\begin{enumerate}
			\item There is no extra restriction on $p$ in our result.
			\item In \cite{MR1946918}, Serrin-Zou showed that if $1<p<n$, then equation \eqref{equa:1.2} defined on $\R^n$ admits no positive solution if and only if $ 0<q<np/(n-p)-1$. Combining this with our result, it seems that the optimal range of $q$ for which the Liouville type result holds for \eqref{equa:1.2} defined on a complete Riemannian manifold of nonnegative Ricci curvature with $1<p<n$ is $\left(-\infty,np/(n-p)-1\right)$.  Direct computation shows that
			$$\frac{n+3}{n-1}(p-1)<\frac{np}{n-p}-1.$$
			Thus, this problem certainly deserves further study.
			
			\item When $a>0$ and $p=2$, by \thmref{thm2}, we deduce that for
			$
			q\in\left(-\infty, (n+3)/(n-1)\right)
			$,
			equation(\ref{equ0}) has no positive solutions. Since
			$$
			\frac{n+3}{n-1}>\frac{n+1}{n-1}+\frac{2}{\sqrt{n(n-1)}},
			$$
			\thmref{thm2} improved Wang-Wei's main results in \cite{MR4559367}.
	\end{enumerate}\end{rem}

	\begin{thm}\label{thm1.6}
		Let $(M,g)$ be a complete non-compact Riemannian manifold with $
		\ric_g\geq-(n-1)\kappa g,
		$
		where $\kappa$ is a non-negetive constant. Suppose $v$ is a positive solution of equation \eqref{equ0} with the constants $a,p$ and $q$ satisfy \eqref{cond1} or \eqref{cond2}. Fix $x_0\in M$, then for any $x\in M$, there holds
		$$
		v(x_0)e^{-c(n,p,q)\sqrt{\kappa}d(x,x_0)}\leq    v(x)\leq v(x_0)e^{c(n,p,q)\sqrt{\kappa}d(x,x_0)},
		$$
		where $d(x_0,x)$ is the geodesic distance between $x_0$ and $x$.
	\end{thm}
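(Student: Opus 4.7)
The plan is to derive \thmref{thm1.6} as a direct consequence of the gradient estimate in \thmref{thm1}, following the standard route from a Cheng--Yau type gradient bound to a Harnack inequality.

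First I would apply \thmref{thm1} with center $o=x_0$ and an arbitrary radius $R>0$. Since $v$ is a positive solution on the whole of the complete, non-compact manifold $M$, it is in particular a positive solution on every geodesic ball $B(x_0,R)$, and hence the hypotheses on $a,p,q$ allow us to conclude
\[
\sup_{B_{R/2}(x_0)}\frac{|\nabla v|}{v}\leq c(n,p,q)\,\frac{1+\sqrt{\kappa}\,R}{R}.
\]
For any fixed $x\in M$, the non-compactness of $M$ lets me take $R\to\infty$; the right-hand side tends to $c(n,p,q)\sqrt{\kappa}$, so I obtain the global pointwise bound
\[
\frac{|\nabla v|(x)}{v(x)}\leq c(n,p,q)\sqrt{\kappa}\qquad \text{for every } x\in M.
\]
Equivalently, $|\nabla\log v|\leq c(n,p,q)\sqrt{\kappa}$ on $M$.

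Next, I would connect $x_0$ and $x$ by a minimizing geodesic $\gamma:[0,d(x,x_0)]\to M$ parametrized by arclength, with $\gamma(0)=x_0$ and $\gamma(d(x,x_0))=x$. Integrating $\frac{d}{ds}\log v(\gamma(s))$ along $\gamma$ and applying the pointwise estimate gives
\[
\bigl|\log v(x)-\log v(x_0)\bigr|\leq \int_0^{d(x,x_0)}|\nabla\log v|(\gamma(s))\,ds\leq c(n,p,q)\sqrt{\kappa}\,d(x,x_0).
\]
Exponentiating yields the two-sided bound
\[
v(x_0)e^{-c(n,p,q)\sqrt{\kappa}\,d(x,x_0)}\leq v(x)\leq v(x_0)e^{c(n,p,q)\sqrt{\kappa}\,d(x,x_0)},
\]
which is the desired Harnack inequality.

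There is no genuine obstacle in this argument; it is a routine consequence of the local gradient estimate. The only point that deserves attention is ensuring that the limit $R\to\infty$ can be taken, which is precisely what the global (complete non-compact) hypothesis on $M$ provides, together with the fact that $v$ solves \eqref{equ0} on all of $M$ rather than merely on a bounded domain. Once this is recorded, the rest is integration along a geodesic.
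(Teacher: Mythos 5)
Your proposal is correct and follows essentially the same route as the paper: apply \thmref{thm1} on balls $B(x_0,R)$, let $R\to\infty$ to obtain the global bound $|\nabla\log v|\leq c(n,p,q)\sqrt{\kappa}$, and integrate along a minimizing geodesic. The paper phrases the integration step in terms of $u=-(p-1)\log v$ rather than $\log v$ directly, but this is only a cosmetic difference absorbed into the constant.
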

	
	We notice that just one week ago, Huang-Guo-Guo posted a preprint on arxiv (see \cite{huang2023gradient}) in where they also study the gradient estimates of equation (\ref{equ0}). We point out here that their results are much different from ours. First, their Theorem 1 and Theorem 3 are all weaker than ours since in our gradient estimate and Liouville type theorem, the range of $q$ is larger. What's more, their Theorem 1 needs a restriction on $p$ and their Theorem 3 does not have a detailed proof; Second,  it is mysterious that their gradient estimate can not completely cover Wang-Zhang's gradient estimate for $p$-harmonic functions (see \cite{MR2880214}).
	
	The rest of  our paper is organized as follows. In section 2, we will give a meticulous estimate of  $\mathcal{L} \left(|\nabla \log v|^{2\alpha}\right)$ (see \eqref{linea} for the explicit definition of the operator $\mathcal{L}$). We will see in the proofs of our main results that by choosing an appropriate $\alpha$, we can largely improve the range of $q$ for which the gradient estimate holds for equation \eqref{equ0}. This is the main innovative point in our article. We shall also recall  Saloff-Coste's Sobolev embedding theorem (\cite[Theorem 3.1]{saloff1992uniformly}) in this section that will play a key role in the iteration process. In section 3, we will carefully use the Moser iteration to provide the proofs of the main results in this paper.
	\section{Preliminaries}
	Throughout this paper, we denote $(M,g)$ an $n$-dim Riemannian manifold, and $\nabla$ the corresponding Levi-Civita connection. For any function $\varphi\in C^1(M)$, we denote $\nabla \varphi\in \Gamma(T^*M)$ by $\nabla \varphi(X)=\nabla_X\varphi$. We denote the volume form $\vol=\sqrt{\det(g_{ij})}d x_1\wedge\ldots\wedge dx_n$ where $(x_1,\ldots, x_n)$ is a local coordinates, and for simplicity we may omit the volume form of integral over $M$.
	
	The $p$-Laplace operator is defined by
	$$
	\Delta_pu=\di\left(|\nabla u|^{p-2}\nabla u\right).
	$$
	The solution of $p$-Laplace equation $\Delta_pu=0$ is the critical point of the energy functional
	$$
	E(u)=\int_M|\nabla u|^p.
	$$

	\begin{defn}\label{def1}
		$v$ is said to be a (weak) solution of equation (\ref{equ0}) on a region $\Omega\subset M$, if $v\in C^1(\Omega)\cap W^{1,p}_{loc}(\Omega)$
		and for all $\psi\in W^{1,p}_0(\Omega)$, we have
		\begin{align*}
			-\int_\Omega|\nabla v|^{p-2}\la\nabla v,\nabla\psi\ra +\int_\Omega av^q\psi=0.
		\end{align*}
	\end{defn}
	It is worth mentioning that any solution $v$ of equation (\ref{equ0}) satisfies $v\in W^{2,2}_{loc}(\Omega)$ and $v\in C^{1,\beta}(\Omega)$ for some $\beta\in(0,1)$(for example, see \cite{MR0709038, MR0727034,MR0474389}).
	
	Next, we recall the  Saloff-Coste's Sobolev inequalities (see \cite[Theorem 3.1]{saloff1992uniformly}) which shall play an key role in our proof of the main theorem.
	\begin{lem}[\cite{saloff1992uniformly}]\label{salof}
		Let $(M,g)$ be a complete manifold with $Ric\geq-(n-1)\kappa$. For $n>2$, there exists a positive constant $C_n$ depending only on $n$, such that for all $B\subset M$ of radius R and  volume $V$ we have for $f\in C^{\infty}_0(B)$
		$$
		\|f\|_{L^{\frac{2n}{n-2}}}^2\leq e^{C_n(1+\sqrt{\kappa}R)}V^{-\frac{2}{n}}R^2\left(\int|\nabla f|^2+R^{-2}f^2\right).
		$$
		For $n=2$, the above inequality holds with $n$ replaced by any fixed $n'>2$.
	\end{lem}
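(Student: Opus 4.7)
The plan is to prove this Sobolev inequality through the classical two-step recipe available under a lower Ricci bound: first establish volume doubling with explicit quantitative control from Bishop--Gromov comparison, then pair it with Buser's scale-invariant $L^2$-Poincar\'e inequality on geodesic balls, and finally invoke the general implication ``doubling $+$ Poincar\'e $\Rightarrow$ Sobolev'' via a Whitney-type telescoping argument.

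First I would record the quantitative doubling estimate. Comparison with the model space of constant sectional curvature $-\kappa$ gives
\[
\frac{\vol(B(x,2r))}{\vol(B(x,r))} \leq e^{C_n(1+\sqrt{\kappa}r)}
\]
for every $x\in M$ and $0<r\leq R$, the point being that the doubling constant on scales up to $R$ is controlled by a single exponential in $\sqrt{\kappa}R$. Next I would invoke Buser's inequality, which under $\ric\geq-(n-1)\kappa g$ yields
\[
\int_{B(x,r)}|f-f_{B(x,r)}|^2 \leq e^{C_n(1+\sqrt{\kappa}r)} r^2 \int_{B(x,2r)} |\nabla f|^2
\]
for every $f\in C^1(B(x,2r))$, with the same shape of exponential factor. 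Both statements rely only on the hyperbolic model comparison and are standard.

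The third step is the core of Saloff-Coste's argument. For $f\in C^\infty_0(B)$ with $B=B(o,R)$, decompose $B$ by a Whitney-type covering into subballs and write the integral of $|f|^{2n/(n-2)}$ against $f$ using a telescoping sum of ball averages over a dyadic sequence of nested balls shrinking to each point. At every scale in the telescope apply the Poincar\'e inequality from Step~2, and use the doubling property from Step~1 to absorb the overlap from the covering. Reassembling via H\"older's inequality against the $L^{2n/(n-2)}$ norm produces the claimed estimate, where the $R^{-2}f^2$ term arises from the non-trivial extra contribution one needs when the ``scale'' of the decomposition reaches the full radius $R$ and one can no longer appeal to Poincar\'e. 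For $n=2$ the Sobolev exponent $2n/(n-2)$ degenerates; one then regards $M$ as a metric measure space of pseudo-dimension $n'>2$ (legitimate because doubling up to scale $R$ holds with any such $n'$) and runs the same telescoping argument.

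The main obstacle is bookkeeping in Step~3: naively iterating Poincar\'e across $O(\log(1/\epsilon))$ scales of the Whitney decomposition would produce a doubling constant raised to a growing power, which after substituting the bound from Step~1 would give an undesired $e^{C_n(\log \cdots)\sqrt{\kappa}R}$ factor. The Jerison--Saloff-Coste variant avoids this by exploiting that the doubling hypothesis is used at all scales simultaneously and collapses the telescoped exponents into a single $e^{C_n(1+\sqrt{\kappa}R)}$; verifying that this collapse survives the non-compact, variable-curvature setting is the only subtle point, and it is the precise reason the estimate is stated with the exponential dependence above rather than a merely polynomial one in $\sqrt{\kappa}R$.
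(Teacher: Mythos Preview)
The paper does not prove this lemma at all: it is stated with a citation to Saloff-Coste's original paper \cite{saloff1992uniformly} and used as a black box in the subsequent Moser iteration. So there is no ``paper's own proof'' to compare against; your proposal supplies far more than the authors do.

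That said, your outline is a faithful sketch of Saloff-Coste's original argument: Bishop--Gromov comparison for quantitative doubling, Buser's $L^2$-Poincar\'e inequality with the same exponential shape in $\sqrt{\kappa}R$, and then the general ``doubling $+$ Poincar\'e $\Rightarrow$ Sobolev'' mechanism. Your identification of the delicate point---that iterated use of doubling across scales must not accumulate into a worse-than-exponential constant---is exactly right, and is handled in the original by a chaining argument that uses doubling only a bounded number of times per scale. One small correction: the volume doubling bound from Bishop--Gromov is more naturally written as $\vol(B(x,2r))/\vol(B(x,r)) \leq 2^n e^{(n-1)\sqrt{\kappa}\,2r}$, which is indeed bounded by $e^{C_n(1+\sqrt{\kappa}r)}$ after absorbing the $2^n$ into the constant; your stated form is correct but it may help to make this derivation explicit if you intend to include a full proof.
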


	By a logarithmic transformation $u = -(p-1)\log v$, equation (\ref{equ0}) becomes
	\begin{align}\label{equ21}
		\Delta_pu -|\nabla u|^p-be^{cu}=0,
	\end{align}
	where
	\begin{align*}
		b = a(p-1)^{p-1},\quad  c = \frac{p-q-1}{p-1}.
	\end{align*}
	Now we consider the linearisation operator $\Lie$ of $p$-Laplace operator:
	\begin{align}\label{linea}
		\Lie(\psi)=\di\left(f^{p/2-1}A(\nabla \psi)\right),
	\end{align}
	where $f = |\nabla u|^2$, and
	\begin{align}\label{defofA}
		A(\nabla\psi) = \nabla\psi+(p-2)f^{-1}\la\nabla \psi,\nabla u\ra\nabla u.
	\end{align}
	We first derive an useful expression of $\mathcal L(f^\alpha)$ for any $\alpha>0$.
	
	\begin{lem}
		For any $\alpha >0$, the equality
		\begin{align}
			\label{bochner1}
			\begin{split}
				\mathcal{L} (f^{\alpha}) =
				&
				\alpha\left(\alpha+\frac{p}{2}-2\right)f^{\alpha+\frac{p}{2}-3}|\nabla f|^2
				+
				2\alpha f^{\alpha+\frac{p}{2}-2} \left(|\nabla\nabla u|^2 + \ric(\nabla u,\nabla u) \right)
				\\
				&
				+
				\alpha(p-2)(\alpha-1)f^{\alpha+\frac{p}{2}-4}\langle\nabla f,\nabla u\rangle^2
				+2\alpha f^{\alpha-1}\langle\nabla\Delta_p u,\nabla u\rangle
			\end{split}
		\end{align}
		holds point-wisely in $\{x:f(x)>0\}$.
	\end{lem}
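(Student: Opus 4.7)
The plan is to apply the definition $\Lie(\psi)=\di(f^{p/2-1}A(\nabla\psi))$ with $\psi=f^\alpha$, unfold the divergence by the Leibniz rule, substitute the classical Bochner formula for $\Delta f$, and finally recognise the remaining lower-order pieces as $2\alpha f^{\alpha-1}\langle\nabla\Delta_p u,\nabla u\rangle$. First I would compute $\nabla f^\alpha=\alpha f^{\alpha-1}\nabla f$ and, using \eqref{defofA}, deduce
$$
f^{p/2-1}A(\nabla f^\alpha)=\alpha f^{\alpha+p/2-2}\nabla f+\alpha(p-2)f^{\alpha+p/2-3}\langle\nabla f,\nabla u\rangle\nabla u.
$$
Applying $\di(gX)=\langle\nabla g,X\rangle+g\,\di(X)$ to each summand then produces five elementary terms with explicit prefactors in $\alpha$, $p$ and powers of $f$: a $|\nabla f|^2$ term, the term $\alpha f^{\alpha+p/2-2}\Delta f$, a $\langle\nabla f,\nabla u\rangle^2$ term, an inner-derivative term $\langle\nabla\langle\nabla f,\nabla u\rangle,\nabla u\rangle$, and a $\langle\nabla f,\nabla u\rangle\Delta u$ term.

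Next I would invoke the Bochner formula $\Delta f=2|\nabla\nabla u|^2+2\langle\nabla\Delta u,\nabla u\rangle+2\ric(\nabla u,\nabla u)$ to convert the $\alpha f^{\alpha+p/2-2}\Delta f$ summand into the $2\alpha f^{\alpha+p/2-2}(|\nabla\nabla u|^2+\ric(\nabla u,\nabla u))$ piece of \eqref{bochner1}, leaving behind a residual $2\alpha f^{\alpha+p/2-2}\langle\nabla\Delta u,\nabla u\rangle$. At this point every summand in the target identity except $2\alpha f^{\alpha-1}\langle\nabla\Delta_p u,\nabla u\rangle$ and $\alpha(p-2)(\alpha-1)f^{\alpha+p/2-4}\langle\nabla f,\nabla u\rangle^2$ has been produced correctly.

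It remains to show that the surplus pieces reassemble into the $p$-Laplace term. For this I would start from the pointwise identity
$$
\Delta_p u=f^{p/2-1}\Delta u+(p/2-1)f^{p/2-2}\langle\nabla f,\nabla u\rangle,
$$
differentiate it along $\nabla u$, and multiply by $2\alpha f^{\alpha-1}$. Term-by-term comparison with the residual four pieces accumulated in Steps 1--2 shows that the $\langle\nabla\Delta u,\nabla u\rangle$, $\langle\nabla\langle\nabla f,\nabla u\rangle,\nabla u\rangle$ and $\langle\nabla f,\nabla u\rangle\Delta u$ contributions match identically, while the $\langle\nabla f,\nabla u\rangle^2$ coefficient is reconciled through
$$
\alpha(p-2)(\alpha+p/2-3)-\tfrac{\alpha(p-2)(p-4)}{2}=\alpha(p-2)(\alpha-1),
$$
which is precisely the source of the coefficient of the third summand in \eqref{bochner1}. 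The only real obstacle is the bookkeeping: each power of $f$ contributes a $p/2-1$ factor when differentiated and these must be tracked carefully so that the cancellation above closes cleanly. Since $f>0$ on the region considered, division by powers of $f$ is legitimate and no extra regularity hypothesis is needed.
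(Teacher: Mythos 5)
Your proposal is correct and follows essentially the same route as the paper: expand $\di\bigl(f^{p/2-1}A(\nabla f^\alpha)\bigr)$ by the Leibniz rule into the five elementary terms, substitute the Bochner formula for $\Delta f$, and absorb the residual $\langle\nabla f,\nabla u\rangle^2$, $\langle\nabla\langle\nabla f,\nabla u\rangle,\nabla u\rangle$, $\langle\nabla f,\nabla u\rangle\Delta u$ and $\langle\nabla\Delta u,\nabla u\rangle$ pieces via the differentiated identity for $\Delta_p u$ along $\nabla u$. Your coefficient reconciliation $\alpha(p-2)(\alpha+p/2-3)-\alpha(p-2)(p/2-2)=\alpha(p-2)(\alpha-1)$ is exactly the computation the paper performs in its equation rewriting the last term of its intermediate expansion.
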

	
	\begin{proof}
		By the definition of $A$ in (\ref{defofA}), we have
		$$
		A\Big(\nabla (f^{\alpha})\Big) = \alpha f^{\alpha-1}\nabla f + \alpha(p-2)f^{\alpha-2}\langle\nabla f,\nabla u\rangle \nabla u = \alpha f^{\alpha-1}A(\nabla f).
		$$
		Hence
		\begin{align*}
			\mathcal{L} (f^{\alpha}) &
			= \alpha \text{div}\Big( f^{\alpha-1}f^{\frac{p}{2}-1}A(\nabla f)\Big)
			= \alpha \Big\langle\nabla( f^{\alpha-1}),f^{\frac{p}{2}-1}A(\nabla f)\Big\rangle + \alpha f^{\alpha-1}\mathcal{L} (f).
		\end{align*}
		
		Direction computation shows that
		\begin{align}
			\label{equ2.6}
			\alpha \Big\langle\nabla( f^{\alpha-1}),f^{\frac{p}{2}-1}A(\nabla f)\Big\rangle
			=&
			\Big\langle \alpha(\alpha-1)f^{\alpha-2}\nabla f, f^{\frac{p}{2}-1}\nabla f + (p-2)f^{\frac{p}{2}-2}\langle\nabla f,\nabla u\rangle\nabla u\Big\rangle,
		\end{align}
		and
		\begin{align}
			\label{equ2.7}
			\begin{split}
				\alpha f^{\alpha-1}\mathcal{L} (f)
				=
				& \alpha f^{\alpha-1}\Big(\left(\frac{p}{2}-1\right)f^{\frac{p}{2}-2}|\nabla f|^2 + f^{\frac{p}{2}-1}\Delta f + (p-2)\left(\frac{p}{2}-2\right)f^{\frac{p}{2}-3}\langle\nabla f,\nabla u\rangle^2
				\\
				&
				+ (p-2)f^{\frac{p}{2}-2}\langle\nabla\langle\nabla f,\nabla u\rangle,\nabla u\rangle + (p-2)f^{\frac{p}{2}-2}\langle\nabla f,\nabla u\rangle\Delta u\Big).
			\end{split}
		\end{align}
		We combine (\ref{equ2.6}) and (\ref{equ2.7}) to obtain
		\begin{align}
			\label{equ2.9}
			\begin{split}
				\Lie(f^\alpha)=
				&
				\alpha\left(\alpha+\frac{p}{2}-2\right)f^{\alpha+\frac{p}{2}-3}|\nabla f|^2
				+
				\alpha f^{\alpha+\frac{p}{2}-2}\Delta f
				\\
				&
				+
				\alpha(p-2)\left(\alpha+\frac{p}{2}-3\right)f^{\alpha+\frac{p}{2}-4}\langle\nabla f,\nabla u\rangle^2\\
				& + \alpha(p-2)f^{\alpha+\frac{p}{2}-3}\langle\nabla\langle\nabla f,\nabla u\rangle,\nabla u\rangle
				+
				\alpha(p-2)f^{\alpha+\frac{p}{2}-3}\langle\nabla f,\nabla u\rangle\Delta u.
			\end{split}
		\end{align}
		
		On the other hand, by the definition of the $p$-Laplacian we have
		\begin{align*}
			\langle\nabla\Delta_p u,\nabla u\rangle &= \left(\frac{p}{2}-1\right)\left(\frac{p}{2}-2\right)f^{\frac{p}{2}-3}\langle\nabla f,\nabla u\rangle^2 + \left(\frac{p}{2}-1\right)f^{\frac{p}{2}-2}\langle\nabla\langle\nabla f,\nabla u\rangle,\nabla u\rangle\\
			&\quad+ \left(\frac{p}{2}-1\right)f^{\frac{p}{2}-2}\langle\nabla f,\nabla u\rangle\Delta u + f^{\frac{p}{2}-1}\langle\nabla\Delta u,\nabla u\rangle.
		\end{align*}
		Thus, the last term of the right hand side of (\ref{equ2.9}) can be rewritten as
		\begin{align}\label{last}
			\begin{split}
				\alpha(p-2)f^{\alpha+\frac{p}{2}-3}\langle\nabla f,\nabla u\rangle\Delta u
				&
				=
				2\alpha f^{\alpha-1}\langle\nabla\Delta_p u,\nabla u\rangle
				- 2\alpha f^{\alpha+\frac{p}{2}-2}\langle\nabla\Delta u,\nabla u\rangle
				\\
				&
				\quad- \alpha(p-2)\left(\frac{p}{2}-2\right)f^{\alpha+\frac{p}{2}-4}\langle\nabla f,\nabla u\rangle^2
				\\
				&
				\quad- \alpha(p-2)f^{\alpha+\frac{p}{2}-3}\langle\nabla\langle\nabla f,\nabla u\rangle,\nabla u\rangle.
			\end{split}
		\end{align}
		By (\ref{last}) and the following Bochner formula $$\frac{1}{2}\Delta f = |\nabla\nabla u|^2 + \ric(\nabla u,\nabla u) + \langle\nabla\Delta u,\nabla u\rangle,$$
		we have
		\begin{align*}
			\mathcal{L}(f^{\alpha}) =
			&
			\alpha\left(\alpha+\frac{p}{2}-2\right)f^{\alpha+\frac{p}{2}-3}|\nabla f|^2
			+
			2\alpha f^{\alpha+\frac{p}{2}-2} \left(|\nabla\nabla u|^2 + \ric(\nabla u,\nabla u) \right)
			\\
			&
			+
			\alpha(p-2)(\alpha-1)f^{\alpha+\frac{p}{2}-4}\langle\nabla f,\nabla u\rangle^2
			+2\alpha f^{\alpha-1}\langle\nabla\Delta_p u,\nabla u\rangle
			.
		\end{align*}
		Thus, we finish the proof of the lemma.
	\end{proof}

	\section{Proof of main theorem}
	
	\subsection{Estimate for the linearisation operator of $p$-Laplace operator}
	We first need to prove a pointwise estimate for $\mathcal L(f^\alpha)$.
	\begin{lem}\label{lem31}\label{linear}
		Let $u$ be a solution of equation (\ref{equ21}) on $(M,g)$ with $\mathrm{Ric}\geq -(n-1)\kappa$.  Denote $f=|\nabla u|^2$ and $a_1=\left|p-\frac{2(p-1)}{n-1}\right|$. The followings hold:
		\begin{enumerate}
			\item If $$a\left(\frac{n+1}{n-1}-\frac{q}{p-1}\right)\geq0,$$ then for any $\alpha\geq 1$, the inequality
			\begin{align*}
				\mathcal L(f^\alpha)\geq&
				\frac{  2\alpha f^{\alpha+\frac{p}{2}}}{n-1}- 2(n-1)\alpha \kappa f^{\alpha+\frac{p}{2}-1}
				-\alpha a_1 f^{\alpha+\frac{p}{2}-\frac{3}{2} }|\nabla f| .
			\end{align*}
			holds point-wisely in $\{x:f(x)>0\}$.
			\item If
			\begin{align*}
				p-1<q<\frac{n+3}{n-1}(p-1),
			\end{align*}
			then there exists some $\alpha_0(p,q,n)>0$ such that for any $\alpha\geq\alpha_0$, the inequality
			\begin{align*}
				\mathcal L(f^\alpha)\geq&
				2\alpha \beta_{n,p,q,\alpha}  f^{\alpha+\frac{p}{2}} - 2\alpha(n-1) \kappa f^{\alpha+\frac{p}{2}-1}
				-\alpha a_1 f^{\alpha+\frac{p}{2}-\frac{3}{2} }|\nabla f|
			\end{align*}
			holds point-wisely in $\{x:f(x)>0\}$, where
			\begin{align*}
				\beta_{n,p,q,\alpha}=\frac{1}{n-1}-\left(\frac{n+1}{n-1}-\frac{q}{p-1}\right)^2\frac{ (2\alpha-1)(n-1)+p-1  }{4(2\alpha-1) }
				>0.
			\end{align*}

		\end{enumerate}
	\end{lem}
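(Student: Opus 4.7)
The plan is to start from the identity \eqref{bochner1} for $\mathcal{L}(f^{\alpha})$ and to progressively simplify its right-hand side by (i) invoking the Ricci lower bound $\ric(\nabla u,\nabla u)\geq-(n-1)\kappa f$, (ii) using \eqref{equ21} to express $\langle\nabla\Delta_{p}u,\nabla u\rangle$ and $\Delta u$ in terms of $f$, $G:=be^{cu}$, and the scalar $u_{11}:=\langle\nabla f,\nabla u\rangle/(2f)$ (which, in a local orthonormal frame with $e_{1}=\nabla u/|\nabla u|$, is just the $(1,1)$-entry of the Hessian of $u$), and (iii) applying the refined Bochner inequality
\[
|\nabla\nabla u|^{2}\ \geq\ \frac{(\Delta u-u_{11})^{2}}{n-1}\ -\ u_{11}^{2}\ +\ \frac{|\nabla f|^{2}}{2f}.
\]
Differentiating \eqref{equ21} yields $\langle\nabla\Delta_{p}u,\nabla u\rangle=pf^{p/2}u_{11}+cGf$, while \eqref{equ21} rewritten as $f^{(p-2)/2}[\Delta u+(p-2)u_{11}]=f^{p/2}+G$ gives $\Delta u-u_{11}=f+Gf^{(2-p)/2}-(p-1)u_{11}$. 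Plugging these into \eqref{bochner1} and using the pointwise identity $\langle\nabla f,\nabla u\rangle^{2}=4f^{2}u_{11}^{2}$, a careful bookkeeping regroups $\mathcal{L}(f^{\alpha})$ into: the leading term $\tfrac{2\alpha}{n-1}f^{\alpha+p/2}$; the Ricci contribution $-2\alpha(n-1)\kappa f^{\alpha+p/2-1}$; a $u_{11}$-linear term $2\alpha\bigl(p-\tfrac{2(p-1)}{n-1}\bigr)u_{11}f^{\alpha+p/2-1}$; a $u_{11}^{2}$ term with coefficient $-2\alpha+4\alpha(p-2)(\alpha-1)$ at order $f^{\alpha+p/2-2}$; the positive piece $\alpha(\alpha+p/2-1)f^{\alpha+p/2-3}|\nabla f|^{2}$; a quadratic-in-$G$ piece $\tfrac{2\alpha G^{2}}{n-1}f^{\alpha-p/2}-\tfrac{4\alpha(p-1)Gu_{11}}{n-1}f^{\alpha-1}$; and (after combining the two $G$-only-linear contributions $\tfrac{4\alpha G}{n-1}f^{\alpha}$ and $2\alpha cGf^{\alpha}$) the clean expression $2\alpha Gf^{\alpha}\bigl(\tfrac{n+1}{n-1}-\tfrac{q}{p-1}\bigr)$.

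For case (1), the last of these has the same sign as $a\bigl(\tfrac{n+1}{n-1}-\tfrac{q}{p-1}\bigr)\geq 0$ and can be discarded. The remaining $G^{2}$ and $Gu_{11}$ terms are handled by completing the square in $G$, which is non-negative up to the penalty $-\tfrac{2\alpha(p-1)^{2}}{n-1}u_{11}^{2}f^{\alpha+p/2-2}$. Adding this to the existing $u_{11}^{2}$ coefficient leaves $-2\alpha+4\alpha(p-2)(\alpha-1)$, while the $u_{11}$-linear term is controlled by $\alpha a_{1}f^{\alpha+p/2-3/2}|\nabla f|$ in modulus via $|u_{11}|\leq|\nabla f|/(2\sqrt{f})$. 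When the net $u_{11}^{2}$ coefficient is negative, it is transferred to $|\nabla f|^{2}$ through $u_{11}^{2}\leq|\nabla f|^{2}/(4f)$; together with the existing $|\nabla f|^{2}$ piece a short algebraic simplification yields a non-negative remainder $\alpha(p-1)(\alpha-\tfrac{1}{2})f^{\alpha+p/2-3}|\nabla f|^{2}$, which can be dropped. This proves case~(1).

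Case~(2) is the delicate one, where the favorable sign of the $G$-linear piece is lost. Here I would complete the square on the \emph{full} quadratic in $G$,
\[
\tfrac{2\alpha}{n-1}f^{\alpha-p/2}G^{2}+\Bigl[2\alpha\delta f^{\alpha}-\tfrac{4\alpha(p-1)u_{11}}{n-1}f^{\alpha-1}\Bigr]G,\quad \delta:=\tfrac{n+1}{n-1}-\tfrac{q}{p-1},
\]
obtaining a non-negative square plus the leftover
\[
-\tfrac{\alpha(n-1)\delta^{2}}{2}f^{\alpha+p/2}+2\alpha\delta(p-1)u_{11}f^{\alpha+p/2-1}-\tfrac{2\alpha(p-1)^{2}}{n-1}u_{11}^{2}f^{\alpha+p/2-2}.
\]
After algebraic simplification the net $u_{11}$-linear coefficient becomes $2\alpha(2p-1-q)$, which I would split using the identity $2p-1-q=\bigl(p-\tfrac{2(p-1)}{n-1}\bigr)+\delta(p-1)$. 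The first summand is bounded below by $-\alpha a_{1}f^{\alpha+p/2-3/2}|\nabla f|$ exactly as in case~(1); the second, $2\alpha\delta(p-1)u_{11}f^{\alpha+p/2-1}$, is absorbed by Young's inequality with the sharp parameter $s=1/(2\alpha(p-1)(2\alpha-1))$. The hard part is to verify that precisely this choice of $s$ is the one for which the resulting $u_{11}^{2}$ penalty, transferred via $u_{11}^{2}\leq|\nabla f|^{2}/(4f)$, exactly consumes the positive $|\nabla f|^{2}$ budget $\alpha(\alpha+p/2-1)$ (one checks the leftover simplifies identically to $0$), and for which the additional $f^{\alpha+p/2}$ deficit from Young is exactly $\tfrac{\alpha\delta^{2}(p-1)}{2(2\alpha-1)}$. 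Adding this deficit to $-\tfrac{\alpha(n-1)\delta^{2}}{2}$ and to the leading $\tfrac{2\alpha}{n-1}$ produces the claimed coefficient $2\alpha\beta_{n,p,q,\alpha}$. The positivity $\beta_{n,p,q,\alpha}>0$ for $\alpha\geq\alpha_{0}(n,p,q)$ sufficiently large reduces, in the limit $\alpha\to\infty$, to $|\delta|<\tfrac{2}{n-1}$, i.e.\ precisely \eqref{cond2}.
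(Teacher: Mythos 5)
Your setup and your Case (1) are sound and run essentially parallel to the paper's argument: starting from \eqref{bochner1}, using the frame identities $\sum_{i\geq2}u_{ii}=f+be^{cu}f^{1-p/2}-(p-1)u_{11}$ and $\langle\nabla\Delta_pu,\nabla u\rangle=pf^{p/2}u_{11}+bce^{cu}f$, and discarding the completed square together with the sign-definite $G$-linear term. Your refined Kato-type inequality is marginally sharper than the paper's $|\nabla\nabla u|^2\geq\frac{|\nabla f|^2}{4f}+\frac{1}{n-1}\bigl(\sum_{i\geq2}u_{ii}\bigr)^2$, and your residual coefficient $\alpha(p-1)(\alpha-\tfrac{1}{2})\geq0$ checks out.

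Case (2), however, has a genuine gap in the $u_{11}^2$ bookkeeping. Completing the square in $G$ first produces, as you correctly record, the penalty $-\tfrac{2\alpha(p-1)^2}{n-1}u_{11}^2f^{\alpha+p/2-2}$ — but this term is never paid for afterwards. With your ``sharp'' Young parameter, the penalty $2\alpha(p-1)(2\alpha-1)u_{11}^2f^{\alpha+p/2-2}$ already consumes the \emph{entire} available budget: indeed $4\alpha(\alpha+\tfrac{p}{2}-1)+\bigl(-2\alpha+4\alpha(p-2)(\alpha-1)\bigr)=2\alpha(p-1)(2\alpha-1)$ exactly, which is the cancellation you flag as ``identically $0$''. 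So nothing is left to absorb the extra $\tfrac{2\alpha(p-1)^2}{n-1}u_{11}^2$, and your final inequality carries an uncontrolled negative term. If you instead reserve part of the budget for it, the Young deficit grows to $\tfrac{\delta^2(p-1)(n-1)}{4[(2\alpha-1)(n-1)-(p-1)]}f^2$ (normalized), strictly larger than the needed $\tfrac{\delta^2(p-1)}{4(2\alpha-1)}$, so you only prove the inequality with a constant strictly smaller than the stated $\beta_{n,p,q,\alpha}$ (still enough to produce some $\alpha_0$ and the downstream estimates, but not the lemma as stated). The root cause is the order of completions: the paper first absorbs the $Gu_{11}$ cross term into the \emph{full} $u_{11}^2$ coefficient $(p-1)\bigl(2\alpha-1+\tfrac{p-1}{n-1}\bigr)$ — the extra $+\tfrac{(p-1)^2}{n-1}$ coming from expanding $\bigl(\sum_{i\geq2}u_{ii}\bigr)^2$ is \emph{harvested} there, whereas your $G$-first completion \emph{spends} it — which reduces the $G^2$ coefficient to $\tfrac{2\alpha-1}{(2\alpha-1)(n-1)+p-1}$, and only then completes the square in $G$ against $\delta Gf^{2-p/2}$; that order yields exactly $\beta_{n,p,q,\alpha}$.
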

	
	\begin{proof}
		Let $\{e_1,e_2,\ldots, e_n\}$ be  an orthonormal frame of $TM$ on a domain with $f\neq 0$ such that $e_1=\frac{\nabla u}{|\nabla u|}$. We have $u_1 = f^{1/2}$  and
		\begin{align}\label{equ:3.1}
			u_{11} = \frac{1}{2}f^{-1/2}f_1 = \frac{1}{2}f^{-1}\la\nabla u,\nabla f\ra.
		\end{align}
		In this case, $\Delta_p u$ has the following expression (see \cite{MR2518892, MR2880214}),
		\begin{align*}
			\Delta_p u
			=&f^{\frac{p}{2}-1}\left((p-1)u_{11}+\sum_{i=2}^nu_{ii}\right).
		\end{align*}
		Substituting the above equality into equation (\ref{equ21}), we obtain:
		\begin{align}\label{equ3.3}
			(p-1)u_{11}+\sum_{i=2}^nu_{ii}=f+be^{cu}f^{1-\frac{p}{2}}.
		\end{align}
		Using the fact $u_1 = f^{1/2}$ again yields
		\begin{align}
			\label{equ3.4}
			|\nabla f|^2/f=4\sum_{i=1}^n u_{1i}^2 .
		\end{align}
		By Cauchy inequality, we arrive at
		\begin{align}
			\label{equ3.5}
			|\nabla\nabla u|^2
			\geq& \sum_{i=1}^nu_{1i}^2 +
			\sum_{i=2}u_{ii}^2
			\geq \frac{|\nabla f|^2}{4f} +\frac{1}{n-1}\left(\sum_{i=2}u_{ii}\right)^2.
		\end{align}
		It follows from \eqref{equ21} that
		\begin{align}\label{3.6}
			\langle\nabla\Delta_p u,\nabla u\rangle =  pf^{\frac{p}{2} }u_{11}+bce^{cu}f .
		\end{align}
		Substituting (\ref{equ:3.1}), (\ref{equ3.4}), (\ref{equ3.5}) and (\ref{3.6}) into (\ref{bochner1}),  we have
		\begin{align}
			\label{equ3.60}
			\begin{split}
				\frac{f^{2-\alpha-\frac{p}{2}}}{2\alpha} \mathcal{L} \left(f^{\alpha}\right)
				\geq
				&
				\frac{1}{2} \left(\alpha+\frac{p-3}{2}\right) \frac{|\nabla f|^2}{f}
				+\frac{1}{n-1}\left(\sum_{i=2}u_{ii}\right)^2+ \ric(\nabla u, \nabla u)
				\\
				&
				+
				2(p-2)(\alpha-1)u_{11}^2
				+ f^{1-\frac{p}{2}}\left(pf^{\frac{p}{2} }u_{11}+bce^{cu}f\right)
				.
			\end{split}
		\end{align}
		Furthermore, noting that $|\nabla f|^2/f\geq 4u_{11}^2$ which follows from \eqref{equ3.4}, and the fact $\alpha+\frac{p-3}{2}>0$ since $\alpha\geq 1$ and $ p>1$, we can infer from \eqref{equ3.60} that
		
		\begin{align}
			\label{equ3.6}
			\begin{split}
				\frac{f^{2-\alpha-\frac{p}{2}}}{2\alpha} \mathcal{L} \left(f^{\alpha}\right)
				\geq
				&
				2 \left(\alpha+\frac{p-3}{2}\right) u_{11}^2
				+\frac{1}{n-1}\left(\sum_{i=2}u_{ii}\right)^2+ \ric(\nabla u, \nabla u)
				\\
				&
				+
				2(p-2)(\alpha-1)u_{11}^2
				+ f^{1-\frac{p}{2}}\left(pf^{\frac{p}{2} }u_{11}+bce^{cu}f\right).
			\end{split}
		\end{align}

		By (\ref{equ3.3}), we have
		\begin{align*}
			\left(\sum_{i=2}u_{ii}\right)^2
			=&   \left(f+be^{cu}f^{1-\frac{p}{2}}-(p-1)u_{11}\right)^2\\
			=&
			f^2+\left(be^{cu}f^{1-\frac{p}{2}}-(p-1)u_{11}\right)^2+2be^{cu}f^{2-\frac{p}{2}}-2f(p-1)u_{11}.
		\end{align*}

		Substituting the above inequality into (\ref{equ3.6}) yields
		\begin{align}
			\label{equ3.7}
			\begin{split}
				\frac{f^{2-\alpha-\frac{p}{2}}}{2\alpha} \mathcal{L}\left(f^\alpha\right)
				\geq
				&
				(p-1)(2\alpha-1)u_{11}^2-(n-1)\kappa f+   \left(p-\frac{2(p-1)}{n-1}\right)f u_{11}+ \frac{f^2}{n-1}
				\\
				&
				+b\left(c+\frac{2}{n-1}\right)e^{cu}f^{2-\frac{p}{2}}
				+\frac{1}{n-1}   \left(be^{cu}f^{1-\frac{p}{2}}-(p-1)u_{11}\right)^2
				.
			\end{split}
		\end{align}
		
		If we denote $a_1 = \left|p-\frac{2(p-1)}{n-1}\right|$,  by (\ref{equ:3.1}) we have
		\begin{align*}
			2\left(p-\frac{2(p-1)}{n-1}\right)f u_{11}\geq -a_1f^{\frac{ 1}{2} }|\nabla f|.
		\end{align*}
		Substituting $a_1$ into (\ref{equ3.7}), we have
		\begin{align}
			\label{equa:3.7}
			\begin{split}
				\frac{f^{2-\alpha-\frac{p}{2}}}{2\alpha} \mathcal{L}(f^\alpha)
				\geq
				&
				(p-1)(2\alpha-1)u_{11}^2-(n-1)\kappa f-\frac{a_1}{2}f^{\frac{ 1}{2} }|\nabla f|+ \frac{f^2}{n-1}
				\\
				&
				+b\left(c+\frac{2}{n-1}\right)e^{cu}f^{2-\frac{p}{2}}
				+\frac{1}{n-1}\left(  be^{cu}f^{1-\frac{p}{2}}-(p-1)u_{11}\right)^2.
			\end{split}
		\end{align}
		
		\textbf{Case \Rmnum{1}:} the constants $a$, $p$ and $q$ satisfy $$a\left(\frac{n+1}{n-1}-\frac{q}{p-1}\right)\geq0.$$
		
		For this case we have
		$$
		be^{cu}f\left(c+\frac{2}{n-1}\right)=a(p-1)^{p-1}e^{cu}f\left(\frac{n+1}{n-1}-\frac{q}{p-1}\right)\geq 0.
		$$
		Since $\alpha\geq 1$, by discarding some non-negative terms in (\ref{equa:3.7}) we obtain
		\begin{align*}
			\mathcal L(f^\alpha)\geq&  2\alpha f^{\alpha+\frac{p}{2}-2}\left(\frac{  f^{2}}{n-1}- (n-1)\kappa f -\frac{a_1}{2}f^{\frac{1}{2} }|\nabla f|\right),
		\end{align*}
		which is just the inequality in the first case of \lemref{lem31}.
		
		By expanding the last term of the right hand side of (\ref{equa:3.7}), we obtain
		\begin{align}\label{2.12}
			\begin{split}
				\frac{f^{2-\alpha-\frac{p}{2}}}{2\alpha} \mathcal{L} (f^{\alpha})\geq &(p-1)\left(2\alpha-1+\frac{p-1}{n-1}\right)u_{11}^2 - (n-1)\kappa f\\
				&+b\left(\frac{n+1}{n-1}-\frac{q}{p-1}\right)e^{cu}f^{2-\frac{p}{2}}-\frac{a_1}{2}f^{\frac{ 1}{2} }|\nabla f|+\frac{f^2}{n-1}\\
				&+\frac{1}{n-1}\left(  b^2e^{2cu}f^{2-p}-2(p-1)be^{cu}f^{1-\frac{p}{2}}u_{11}\right).
			\end{split}
		\end{align}

		\textbf{Case \Rmnum{2} :} the constants $a$, $p$ and $q$ satisfy
		\begin{align*}
			p-1<q<\frac{n+3}{n-1}(p-1).
		\end{align*}
		
		In the present situation, the condition is equivalent to
		\begin{align*}
			\frac{1}{n-1}-\frac{1}{4}\left(\frac{n+1}{n-1}-\frac{q}{p-1}\right)^2>0.
		\end{align*}
		This implies
		\begin{align*}
			\lim_{\alpha\to\infty}\frac{1}{n-1}-\left(\frac{n+1}{n-1}-\frac{q}{p-1}\right)^2\frac{ (2\alpha-1)(n-1)+p-1  }{4(2\alpha-1) }
			>0.
		\end{align*}
		Thus, we can choose $\alpha_0=\alpha_0(n,p,q)$ large enough such that, for any $\alpha\geq\alpha_0$ there holds true
		\begin{align}\label{defofdelta}
			\beta_{n,p,q,\alpha}=\frac{1}{n-1}-\left(\frac{n+1}{n-1}-\frac{q}{p-1}\right)^2\frac{ (2\alpha-1)(n-1)+p-1  }{4(2\alpha-1) }
			>0.
		\end{align}

		On the other hand, by using the inequality $a^2-2ab\geq -b^2$ we have
		\begin{align}
			\label{equa:2.13}
			\begin{split}
				&(p-1)\left(2\alpha-1+\frac{p-1}{n-1}\right)u_{11}^2-2
				\frac{(p-1)}{n-1}be^{cu}f^{1-\frac{p}{2}}u_{11}\\
				\geq &-\frac{(p-1)b^2e^{2cu}f^{2-p}}{((2\alpha-1)(n-1)+p-1)(n-1)} .
			\end{split}
		\end{align}
		Combining (\ref{2.12}) and (\ref{equa:2.13}) yields
		\begin{align}
			\label{equa:2.14}
			\begin{split}
				\frac{f^{2-\alpha-\frac{p}{2}}}{2\alpha} \mathcal{L}(f^\alpha)
				\geq &\frac{(2\alpha-1) b^2e^{2cu}f^{2-p}}{(2\alpha-1)(n-1)+p-1}
				- \kappa(n-1)f -\frac{a_1}{2}f^{\frac{1}{2}}|\nabla f|\\
				& + b\left(\frac{n+1}{n-1}-\frac{q}{p-1}\right)e^{cu}f^{2-\frac{p}{2}}+\frac{f^2}{n-1}.
			\end{split}
		\end{align}
		Applying the inequality $a^2+2ab\geq -b^2$ again, we have
		\begin{align}
			\label{equa:3.15}
			\begin{split}
				&\frac{(2\alpha-1) b^2e^{2cu}f^{2-p}}{ (2\alpha-1)(n-1)+p-1  }
				+ b\left(\frac{n+1}{n-1}-\frac{q}{p-1}\right)e^{cu}f^{2-\frac{p}{2}}\\
				\geq& -\left(\frac{n+1}{n-1}-\frac{q}{p-1}\right)^2\frac{ (2\alpha-1)(n-1)+p-1  }{4(2\alpha-1) }f^2
			\end{split}
		\end{align}
		Substituting (\ref{equa:3.15}) into (\ref{equa:2.14}), we arrive at
		\begin{align*}
			\frac{f^{2-\alpha-\frac{p}{2}}}{2\alpha} \mathcal{L} (f^\alpha)
			\geq
			&
			\left(\frac{1}{n-1}-\left(\frac{n+1}{n-1}-\frac{q}{p-1}\right)^2\frac{ (2\alpha-1)(n-1)+p-1}{4(2\alpha-1) }\right)f^2
			\\
			& - (n-1)\kappa f-\frac{a_1}{2}f^{\frac{1}{2}}|\nabla f|.
		\end{align*}
		Hence,
		\begin{align*}
			\mathcal L(f^\alpha)
			\geq&   2\beta_{n,p,q,\alpha} \alpha f^{\alpha+\frac{p}{2} } - 2\alpha(n-1)\kappa f^{\alpha+\frac{p}{2}-1}
			-a_1\alpha f^{\alpha+\frac{p}{2}-\frac{3}{2}}|\nabla f| ,
		\end{align*}
		where $\beta_{n,p,q,\alpha}>0$ is defined in (\ref{defofdelta}). Thus, we complete the proof of this lemma.
	\end{proof}

	From now on, we fix
	\begin{align*}
		\alpha=\alpha_0(n,p,q)
	\end{align*}
	and use $a_1$, $a_2$, $\cdots$, to denote constants depending only on $n$, $p$ and $q$. It is worthy to point out that if
	$$
	q\to\frac{n+3}{n-1}(p-1) \quad \text{or}\quad q\to p-1,
	$$
	then $\alpha_0(n,p,q)\to \infty$.
	
	Denote $\beta_{n,p,q} = \beta_{n,p,q,\alpha_0}$. Moreover, from the definition of $\beta_{n,p,q,\alpha}$ (see \eqref{defofdelta}) we can also see easily that
	\begin{align*}
		\beta_{n,p,q} < \frac{1 }{n-1}.
	\end{align*}
	So, we actually have proved that there holds
	\begin{align}
		\label{equa3.14}
		\mathcal L(f^{\alpha_0})\geq
		2\alpha_0 f^{\alpha_0+\frac{p}{2}-2}\left(\beta_{n,p,q }  f^{2 } - 2 (n-1)\kappa f
		-\frac{a_1}{2}  f^{ \frac{1}{2}}|\nabla f|\right),
	\end{align}
	if one of the conditions (1) and (2) in \lemref{linear} is satisfied.

	\subsection{Deducing the main integral inequality}\label{sec3.2}
	Now, we need to establish a key integral inequality of $f=|\nabla u|^2$.
	\begin{lem}
		Let $\Omega = B_R(o)\subset M$ be a geodesic ball. Under the same assumptions as in \lemref{lem31}, we have the following integral inequality
		\begin{align*}
			\begin{split}
				& \beta_{n,p,q} \int_\Omega f^{\alpha_0+\frac{p}{2}+t}\eta^2 +
				\frac{a_3}{ t }e^{-t_0}V^{\frac{2}{n}}R^{-2}\left\|f^{\frac{\alpha_0+t-1}{2}+\frac{p}{4}}\eta\right\|_{L^{\frac{2n}{n-2}}}^2\\
				\leq & a_5t_0^2R^{-2} \int_\Omega f^{\alpha_0+\frac{p}{2}+t-1}\eta^2+\frac{a_4}{t }\int_\Omega f^{\alpha_0+\frac{p}{2}+t-1}|\nabla\eta|^2,
			\end{split}
		\end{align*}
		where $\beta_{n,p,q}$ is given in the above \lemref{lem31}, $a_3$, $a_4$ and $a_5$ depend only on $n$, $p$ and $q$.
	\end{lem}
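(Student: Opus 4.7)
The plan is to test the pointwise inequality \eqref{equa3.14} against $f^t\eta^2$ for $\eta\in C_0^\infty(\Omega)$ a smooth cutoff, integrate by parts on the divergence-form left-hand side, absorb every $|\nabla f|$ cross term by Young's inequality, and finally upgrade the resulting weighted-$L^2$ gradient integral into an $L^{2n/(n-2)}$-norm via \lemref{salof}.

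First, multiplying \eqref{equa3.14} by $f^t\eta^2$ and integrating produces three terms on the right: $2\alpha_0\beta_{n,p,q}\int f^{\alpha_0+p/2+t}\eta^2$, $-4\alpha_0(n-1)\kappa\int f^{\alpha_0+p/2+t-1}\eta^2$, and $-\alpha_0 a_1\int f^{\alpha_0+p/2+t-3/2}|\nabla f|\eta^2$. On the left, using $\mathcal{L}(\psi)=\di(f^{p/2-1}A(\nabla\psi))$, the identity $A(\nabla f^{\alpha_0})=\alpha_0 f^{\alpha_0-1}A(\nabla f)$, and $\nabla(f^t\eta^2)=tf^{t-1}\eta^2\nabla f+2f^t\eta\nabla\eta$, integration by parts converts $\int\mathcal{L}(f^{\alpha_0})f^t\eta^2$ into
\[
-\alpha_0 t\int_\Omega f^{\alpha_0+\frac{p}{2}+t-3}\eta^2\la A(\nabla f),\nabla f\ra - 2\alpha_0\int_\Omega f^{\alpha_0+\frac{p}{2}+t-2}\eta\la A(\nabla f),\nabla\eta\ra.
\]
The uniform ellipticity $\la A(\nabla f),\nabla f\ra\geq\min(1,p-1)|\nabla f|^2$ and the pointwise bound $|A(\nabla f)|\leq (1+|p-2|)|\nabla f|$ let me replace the inner products by $|\nabla f|^2$ and $|\nabla f|$ up to a constant depending only on $p$. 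After rearrangement this places a favourable gradient integral $c_p t\int f^{\alpha_0+p/2+t-3}|\nabla f|^2\eta^2$ on the left.

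Next, I dispose of the two cross integrals involving $|\nabla f|$ by Young's inequality. For the first I split $f^{\alpha_0+p/2+t-3/2}|\nabla f|\eta^2=(f^{(\alpha_0+p/2+t-3)/2}|\nabla f|\eta)(f^{(\alpha_0+p/2+t)/2}\eta)$ and pick the Young weight proportional to $t$, so the term contributes at most one quarter of the gradient integral plus a residual of order $t^{-1}\int f^{\alpha_0+p/2+t}\eta^2$; for $t$ above a threshold depending only on $n,p,q$, this residual is dominated by the $\beta_{n,p,q}$-term on the left. A similar splitting of the cutoff cross integral $f^{\alpha_0+p/2+t-2}|\nabla f|\eta|\nabla\eta|$ leaves behind a remainder of order $t^{-1}\int f^{\alpha_0+p/2+t-1}|\nabla\eta|^2$, which is precisely the final $(a_4/t)\int f^{\alpha_0+p/2+t-1}|\nabla\eta|^2$ term on the right-hand side of the claim.

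Finally, set $w:=f^{(\alpha_0+t-1)/2+p/4}\eta$, so that $w^2=f^{\alpha_0+p/2+t-1}\eta^2$ and, with $s:=(\alpha_0+t-1)/2+p/4$,
\[
|\nabla w|^2\leq 2s^2 f^{\alpha_0+\frac{p}{2}+t-3}|\nabla f|^2\eta^2+2f^{\alpha_0+\frac{p}{2}+t-1}|\nabla\eta|^2.
\]
Applying \lemref{salof} to $w$ and dividing the inequality from the previous step by $s^2\sim t^2/4$, the factor $t/s^2\sim 1/t$ in front of the gradient integral yields exactly the $(a_3/t)e^{-t_0}V^{2/n}R^{-2}\|w\|^2_{L^{2n/(n-2)}}$ term on the left, with $t_0:=C_n(1+\sqrt{\kappa}R)$. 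The residual $R^{-2}\int w^2$ from Sobolev and the $\kappa\int f^{\alpha_0+p/2+t-1}\eta^2$ term from the pointwise estimate then combine via $\kappa\leq t_0^2/R^2$ into the single right-hand term $a_5 t_0^2 R^{-2}\int f^{\alpha_0+p/2+t-1}\eta^2$. The main difficulty lies in the constant bookkeeping: the $1/t$ (as opposed to $1/t^2$) scaling of the Sobolev prefactor relies on the sharp balance between the gradient coefficient $\sim t$ and the Sobolev factor $s^2\sim t^2/4$, and one must verify that the two Young absorptions in the previous step do not perturb the $\beta_{n,p,q}$ coefficient of the leading integral on the left.
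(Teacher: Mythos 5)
Your proposal follows essentially the same route as the paper's proof: test \eqref{equa3.14} against $f^{t}\eta^{2}$, integrate by parts using $A(\nabla f^{\alpha_0})=\alpha_0 f^{\alpha_0-1}A(\nabla f)$, invoke the ellipticity bounds $\la A(\nabla f),\nabla f\ra\geq\min(1,p-1)|\nabla f|^2$ and $|A(\nabla f)|\lesssim_p|\nabla f|$, absorb both $|\nabla f|$ cross terms by Young's inequality with weight $\sim t$ (taking $t$ large enough that the residual is dominated by the $\beta_{n,p,q}$-term), and convert the surviving gradient integral into the $L^{2n/(n-2)}$ norm via \lemref{salof}, merging the $\kappa$-term and the Sobolev residual into $a_5t_0^2R^{-2}$. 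The only detail the paper adds is that it actually tests with $f_\epsilon^{t}\eta^2$, $f_\epsilon=(f-\epsilon)^{+}$, and lets $\epsilon\to0$, since \eqref{equa3.14} is only established pointwise on $\{f>0\}$; this is a routine regularization you should include but it does not change the argument.
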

	
	\begin{proof}
		By choosing $\psi = f_{\epsilon}^{t}\eta^2$ as the test function of \eqref{equa3.14}, where $\eta\in C^{\infty}_0(\Omega,\R)$ is non-negative, $f_\epsilon=(f-\epsilon)^+$ with $\epsilon>0$, and $t>1$ is to be determined later, we can deduce from (\ref{equa3.14}) that
		\begin{align*}
			&-\int_\Omega\la f^{p/2-1}\nabla f^{\alpha_0} +(p-2)f^{p/2-2}\la\nabla f^{\alpha_0},\nabla u\ra\nabla u,\nabla \psi \ra
			\\
			\geq &
			2\beta_{n,p,q }\alpha_0\int_\Omega f^{\alpha_0+\frac{p}{2} }f^{t}_\epsilon\eta^2  -2(n-1)\alpha_0\kappa\int_\Omega f^{\alpha_0+\frac{p}{2} -1}f^{t}_\epsilon\eta^2
			- a_1\alpha_0\int_\Omega f^{\alpha_0+\frac{p-3}{2} }f^{t}_\epsilon|\nabla f|\eta^2.
		\end{align*}
		Hence,
		\begin{align}
			\label{equa:3.17}
			\begin{split}
				&-\int_\Omega \alpha_0 tf^{\alpha_0+\frac{p}{2}-2}f^{t-1}_\epsilon|\nabla f|^2\eta^2
				+
				t\alpha_0(p-2)f^{\alpha_0+\frac{p}{2}-3}f^{t-1}_\epsilon\la\nabla f,\nabla u\ra^2\eta^2
				\\
				&-\int_\Omega2\eta\alpha_0 f^{\alpha_0+\frac{p}{2}-2}f^{t}_\epsilon\la\nabla f,\nabla\eta\ra+2\alpha_0\eta(p-2)f^{\alpha_0+\frac{p}{2}-3}f^{t}_\epsilon\la\nabla f,\nabla u\ra\la\nabla u,  \nabla\eta\ra\\
				\geq & 2\beta_{n,p,q }\alpha_0\int_\Omega f^{\alpha_0+\frac{p}{2} }f^{t}_\epsilon\eta^2  -2(n-1)\alpha_0\kappa\int_\Omega f^{\alpha_0+\frac{p}{2} -1}f^{t}_\epsilon\eta^2- a_1\alpha_0\int_\Omega f^{\alpha_0+\frac{p-3}{2} }f^{t}_\epsilon|\nabla f|\eta^2.
			\end{split}
		\end{align}
		
		Next, we need to use the following two inequalities
		\begin{align}\label{2.24}
			f^{t-1}_\epsilon |\nabla f|^2 +(p-2)f^{t-1}_\epsilon f^{-1}\la\nabla f,\nabla u\ra^2\geq a_2 f^{t-1}_\epsilon|\nabla f|^2
		\end{align}
		where $a_2 = \min\{1, p-1\}$, and
		\begin{align}\label{2.25}
			f^{t}_\epsilon\la\nabla f,\nabla\eta\ra+ (p-2)f^{t}_\epsilon f^{-1}\la\nabla f,\nabla u\ra\la\nabla u,  \nabla\eta\ra\geq -(p+1)  f^{t}_\epsilon|\nabla f||\nabla\eta|.
		\end{align}
		Now, substituting (\ref{2.24}) and (\ref{2.25}) into (\ref{equa:3.17}), dividing both sides by $\alpha_0$ and letting $\epsilon\to0$, we can obtain
		\begin{align}\label{2.26}
			\begin{split}
				&2\beta_{n,p,q }\int_\Omega f^{\alpha_0+\frac{p}{2}+t}\eta^2
				+
				a_2  t\int_\Omega f^{\alpha_0+\frac{p}{2}+t-3}|\nabla f|^2\eta^2\\
				\leq &
				2(n-1) \kappa\int_\Omega f^{\alpha_0+\frac{p}{2}+t-1}\eta^2
				+ a_1 \int_\Omega f^{\alpha_0+\frac{p-3}{2}+t }|\nabla f|\eta^2
				\\
				&
				+2 (p+1)\int_\Omega  f^{\alpha_0+\frac{p}{2}+t-2}|\nabla f||\nabla\eta|\eta.
			\end{split}
		\end{align}
		Since $u\in W^{2,2}_{loc}(\Omega)\cap C^{1,\beta}(\Omega)$, we have $f\in C^\beta(\Omega)$ and $|\nabla f|\in L^2_{loc}$, and hence the integrals in the above make sense.
		
		By Cauchy-inequality, we have
		\begin{align}
			\label{2.27}
			a_1 f^{\alpha_0+\frac{p-3}{2}+t }|\nabla f|\eta^2\leq
			\frac{a_2t}{4}    f^{\alpha_0+\frac{p}{2}+t-3}|\nabla f|^2\eta^2
			+\frac{ a_1^2}{a_2t} f^{\alpha_0+\frac{p}{2}+t}\eta^2,
		\end{align}
		and
		\begin{align}\label{new}
			2(p+1)f^{\alpha_0+\frac{p}{2}+t-2}|\nabla f||\nabla\eta|\eta\leq
			\frac{a_2t}{4}    f^{\alpha_0+\frac{p}{2}+t-3}|\nabla f|^2\eta^2
			+\frac{4(p+1)^2 }{a_2t} f^{\alpha_0+\frac{p}{2}+t-1}|\nabla \eta|^2.
		\end{align}
		Now we choose $t$ large enough such that
		\begin{align}\label{2.28}
			\frac{a_1^2}{a_2t}\leq  \beta_{n,p,q}.
		\end{align}
		Then, it follows from \eqref{2.26}, \eqref{2.27}, \eqref{new} and \eqref{2.28} that
		\begin{align}
			\label{2.29}
			\begin{split}
				& \beta_{n,p,q }\int_\Omega f^{\alpha_0+\frac{p}{2}+t}\eta^2
				+
				\frac{a_2  t}{2}\int_\Omega f^{\alpha_0+\frac{p}{2}+t-3}|\nabla f|^2\eta^2 \\
				\leq  &
				2(n-1) \kappa\int_\Omega f^{\alpha_0+\frac{p}{2}+t-1}\eta^2
				+\frac{4(p+1)^2 }{a_2t}  \int_\Omega f^{\alpha_0+\frac{p}{2}+t-1}|\nabla \eta|^2.
			\end{split}
		\end{align}
		
		On the other hand, we have
		\begin{align}\label{2.30}
			\begin{split}
				\frac{1}{2}\left|\nabla \left(f^{\frac{\alpha_0+t-1}{2}+\frac{p}{4} }\eta \right)\right|^2\leq & \left|\nabla f^{\frac{\alpha_0+t-1}{2}+\frac{p}{4} }\right|^2\eta^2 +f^{\alpha_0+t-1+\frac{p}{2}}|\nabla\eta|^2
				\\
				=&\frac{(2\alpha_0+2t+p-2)^2}{16}f^{\alpha_0+t+\frac{p}{2}-3}|\nabla f |^2\eta^2 +f^{\alpha_0+t-1+\frac{p}{2}}|\nabla\eta|^2  .
			\end{split}
		\end{align}
		Substituting (\ref{2.30}) into (\ref{2.29}) gives
		\begin{align*}
			\begin{split}
				& \beta_{n,p,q } \int_\Omega f^{\alpha_0+\frac{p}{2}+t}\eta^2
				+
				\frac{4a_2t}{(2\alpha_0+2t+p-2)^2}\int_\Omega   \left|\nabla \left(f^{\frac{\alpha_0+t-1}{2}+\frac{p}{4} }\eta\right)\right|^2 \\
				\leq  &
				2(n-1)\kappa  \int_\Omega f^{\alpha_0+t+\frac{p}{2}-1}\eta^2
				+
				\frac{4(p+1)^2 }{a_2t} \int_\Omega f^{\alpha_0+\frac{p}{2}+t-1}|\nabla\eta|^2\\
				&+
				\frac{8a_2t}{(2\alpha_0+2t+p-2)^2}\int_\Omega f^{\alpha_0+t+\frac{p}{2}-1}|\nabla\eta|^2 .
			\end{split}
		\end{align*}
		We choose then $a_3$ and $a_4$ depending on $n$, $p$ and $q$ such that
		\begin{align*}
			\frac{a_3}{t}\leq  \frac{4a_2t}{(2\alpha_0+2t+p-2)^2}\quad\text{and}\quad
			\frac{8a_2t}{(2\alpha_0+2t+p-2)^2}+\frac{4(p+1)^2}{a_2t} \leq\frac{a_4}{t}.
		\end{align*}
		Hence
		\begin{align}
			\label{equa2.33}
			\begin{split}
				& \beta_{n,p,q } \int_\Omega f^{\alpha_0+\frac{p}{2}+t}\eta^2
				+
				\frac{a_3}{t}\int_\Omega   \left|\nabla \left(f^{\frac{\alpha_0+t-1}{2}+\frac{p}{4} }\eta\right)\right|^2 \\
				\leq  &
				2(n-1)\kappa  \int_\Omega f^{\alpha_0+t+\frac{p}{2}-1}\eta^2
				+
				\frac{a_4 }{t} \int_\Omega f^{\alpha_0+\frac{p}{2}+t-1}\left|\nabla\eta\right|^2 .
			\end{split}
		\end{align}
		Moreover, Saloff-Coste's Sobolev inequality tells us
		$$
		e^{-C_n(1+\sqrt{\kappa}R)}V^{\frac{2}{n}}R^{-2}\left\|f^{\frac{\alpha_0+t-1}{2}+\frac{p}{4} }\eta\right\|_{L^{\frac{2n}{n-2}}(\Omega)}^2\leq \int_{\Omega}\left|\nabla \left(f^{\frac{\alpha_0+t-1}{2}+\frac{p}{4} }\eta\right)\right|^2+R^{-2}\int_\Omega f^{ \alpha_0+t +\frac{p}{2} -1 }\eta ^2.
		$$
		Substituting the above into \eqref{equa2.33} yields
		\begin{align}
			\label{3.32}
			\begin{split}
				& \beta_{n,p,q } \int_\Omega f^{\alpha_0+\frac{p}{2}+t}\eta^2
				+
				\frac{a_3}{t }e^{-C_n(1+\sqrt{\kappa}R)}V^{\frac{2}{n}}R^{-2}\left\|f^{\frac{\alpha_0+t-1}{2}+\frac{p}{4} }\eta\right\|_{L^{\frac{2n}{n-2}}}^2\\
				\leq  &
				2(n-1)\kappa  \int_\Omega f^{\alpha_0+t+\frac{p}{2}-1}\eta^2+\frac{a_4}{ t }\int_\Omega f^{\alpha_0+t+\frac{p}{2}-1}|\nabla\eta|^2
				+\frac{a_3}{ t }\int_\Omega R^{-2}f^{ \alpha_0 +\frac{p}{2}+t-1 }\eta ^2.
			\end{split}
		\end{align}

		Now we set $t_0 = c_1({n,p,q })(1+\sqrt{\kappa} R)$ where $$c_{1}(n,p,q )=\max\left\{C_n, \frac{ a_1^2}{a_2\beta_{n,p,q }}, 2\right\},$$
		and choose $t$ such that $t\geq t_0$. Since
		\begin{align*}
			2(n-1)\kappa  R^2\leq\frac{ 2(n-1)}{c_1^2(n,p,q ) }t_0^2\quad \text{ and}\quad \frac{a_3}{t}\leq \frac{a_3}{c_1(n,p,q ) },
		\end{align*}
		there exists $a_5 = a_5(n,p,q)>0$ such that
		\begin{align}\label{a5}
			2(n-1)\kappa  R^2+\frac{a_3}{t}\leq a_5t_0^2 = a_5 c_1^2(n,p,q ) \left(1+\sqrt{\kappa} R\right)^2.
		\end{align}
		It follows from (\ref{3.32}) and (\ref{a5}) that
		\begin{align}\label{2.34}
			\begin{split}
				& \beta_{n,p,q } \int_\Omega f^{\alpha_0+\frac{p}{2}+t}\eta^2
				+
				\frac{a_3}{ t }e^{-t_0}V^{\frac{2}{n}}R^{-2}\left\|f^{\frac{\alpha_0+t-1}{2}+\frac{p}{4} }\eta\right\|_{L^{\frac{2n}{n-2}}}^2\\
				\leq  &
				a_5t_0^2R^{-2} \int_\Omega f^{\alpha_0+\frac{p}{2}+t-1}\eta^2+\frac{a_4}{t }\int_\Omega f^{\alpha_0+\frac{p}{2}+t-1}|\nabla\eta|^2.
			\end{split}
		\end{align}
		This is the required inequality and we finish the proof of this lemma.
	\end{proof}	
	
	\subsection{$L^{\beta}$ bound of gradient in a ball with radius $3R/4$}\label{sec3.3}
	Next, we turn to giving the following $L^{\beta}$ upper bound of the gradient of positive solutions to equation \eqref{equ0}.
	\begin{lem}\label{lpbound}
		Let $(M,g)$ be a complete manifold with $Ric\geq-(n-1)\kappa$ and $$\beta = \left(\alpha_0+t_0+\frac{p}{2}-1\right)\frac{n}{n-2}.$$
		Assume $u$ is a positive solution to equation \eqref{equ21} on the geodesic ball $B(o,R)\subset M$ and $f=|\nabla u|^2$. Then there exists $a_8 = a_8(n,p, q)>0$ such that
		\begin{align}\label{lpbpund}
			\|f \|_{L^{\beta}(B_{3R/4}(o))}\leq a_8V^{\frac{1}{\beta}} \frac{t_0^2}{ R^2},
		\end{align}
		where $V$ is the volume of geodesic ball $B_R(o)$.
	\end{lem}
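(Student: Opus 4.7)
My plan follows a two-step procedure. The first step extracts an $L^{m_2}$ bound for $f$ on a ball slightly larger than $B_{3R/4}$ from (2.34) via Moser iteration plus an absorbing lemma; the second step returns to (2.34) with a cutoff supported in that slightly larger ball, retains the Sobolev term on the left, and uses the $L^{m_2}$ bound (converted to an $L^{m_1}$ bound by H\"older) to control the right side, thereby upgrading to the target $L^\beta$ bound on $B_{3R/4}$.

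Concretely, write $m_1 = \alpha_0+p/2+t_0-1$, $m_2 = m_1+1$, so $\beta = m_1 n/(n-2)$. For step one, for each $7R/8 \le r_1 < r_2 \le R$ I specialize (2.34) to $t = t_0$ with a cutoff $\eta$ going from $1$ on $B_{r_1}$ to $0$ outside $B_{r_2}$ and $|\nabla\eta|\le C/(r_2-r_1)$, drop the nonnegative Sobolev term, and apply Young's inequality $C\,t_0^2(r_2-r_1)^{-2} f^{m_1} \le (\beta_{n,p,q}/2) f^{m_2} + C'\bigl(t_0/(r_2-r_1)\bigr)^{2m_2}$ (valid since $m_2=m_1+1$). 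This produces the recursion
\[ \int_{B_{r_1}} f^{m_2} \le \tfrac{1}{2}\int_{B_{r_2}} f^{m_2} + C''\,\Bigl(\frac{t_0}{r_2-r_1}\Bigr)^{2m_2} V, \]
to which the classical absorbing lemma (e.g.\ of Giaquinta--Giusti type) applies and yields $\int_{B_{7R/8}} f^{m_2} \le C_3\,(t_0/R)^{2m_2} V$. H\"older's inequality then gives $\int_{B_{7R/8}} f^{m_1} \le V^{1/m_2}\bigl(\int_{B_{7R/8}} f^{m_2}\bigr)^{m_1/m_2} \le C_4\,(t_0/R)^{2m_1} V$. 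For step two I invoke (2.34) once more with $t = t_0$ and a new cutoff $\eta\equiv 1$ on $B_{3R/4}$, $\eta\in C_0^\infty(B_{7R/8})$, $|\nabla\eta|\le C/R$; the Sobolev term on the left contains $(\int_{B_{3R/4}} f^\beta)^{(n-2)/n}$, while the two right-hand integrals are bounded directly by the H\"older estimate above, yielding
\[ \frac{a_3}{t_0}\,e^{-t_0}\,V^{2/n} R^{-2}\Bigl(\int_{B_{3R/4}} f^\beta\Bigr)^{\frac{n-2}{n}} \le C_5\,V\Bigl(\frac{t_0}{R}\Bigr)^{2m_2}. \]
Rearranging, using $(m_2-1)n/(n-2) = \beta$ to match the $R^{-2\beta}$ scaling, and taking the $1/\beta$ root produces $\|f\|_{L^\beta(B_{3R/4})} \le C_6\,e^{t_0/m_1}\,t_0^{2+c/m_1}\,V^{1/\beta}/R^2$.

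The main obstacle is the factor $e^{-t_0}$ on the left-hand side of (2.34), which becomes $e^{t_0/m_1}$ after rearranging, together with auxiliary polynomial powers $t_0^{c/m_1}$. The argument is rescued by the structural inequality $m_1 = \alpha_0 + p/2 + t_0 - 1 \ge t_0$, which is immediate from $\alpha_0 \ge 1$ and $p > 1$: this forces $e^{t_0/m_1} \le e$, and since $t_0\ge 2$ also $t_0^{c/m_1} \le t_0^{c/t_0}$ is uniformly bounded, both by constants depending only on $n,p,q$. Merging these with the other constants into a single $a_8(n,p,q)$ yields the stated bound.
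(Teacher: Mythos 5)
Your overall two-pass scheme is reasonable, your treatment of the term $a_5t_0^2R^{-2}\int f^{m_1}\eta^2$ via Young's inequality is essentially the paper's $\Omega_1/\Omega_2$ splitting, and your observation that $m_1=\alpha_0+p/2+t_0-1\geq t_0$ kills the $e^{t_0/m_1}$ factor is exactly the point the paper also uses. The gap is in step one: the constant in the Giaquinta--Giusti absorbing lemma is \emph{not} uniform in the exponent of $(r_2-r_1)^{-1}$. Your recursion has the form $\phi(r_1)\le\tfrac12\phi(r_2)+A\,(r_2-r_1)^{-\gamma}$ with $\gamma=2m_2$, and the lemma's constant is of order $(1-\lambda)^{-\gamma}\bigl(1-\tfrac12\lambda^{-\gamma}\bigr)^{-1}$ for some $\lambda$ with $\lambda^{\gamma}>\tfrac12$; this forces $1-\lambda\le C/m_2$ and hence a constant of order $(Cm_2)^{2m_2}$. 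Since $m_2=\alpha_0+p/2+t_0$ and $t_0=c_1(n,p,q)(1+\sqrt{\kappa}R)$ is unbounded when $\kappa>0$, this is not an $(n,p,q)$-constant, and after extracting the $1/m_2$ and $1/m_1$ roots it survives as an extra factor $(Cm_2)^2\sim t_0^2$. Concretely, your argument yields $\|f\|_{L^{\beta}(B_{3R/4})}\le C\,V^{1/\beta}t_0^4/R^2$ rather than $t_0^2/R^2$; fed into Lemma 3.4 this degrades Theorem 1.1 to $(1+\sqrt{\kappa}R)^4/R^2$ and breaks Theorem 1.6, whose proof needs $|\nabla u|\le c(1+\sqrt{\kappa}R)/R$ to stay bounded as $R\to\infty$. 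Shrinking the absorption parameter $\theta$ below $4^{-m_2}$ to tame $(1-\lambda)^{-\gamma}$ is no escape: the Young constant then costs $\theta^{-m_1}$, whose $m_2$-th root is exponential in $t_0$. (A smaller, fixable point: with $r_2=R$ your cutoff is not compactly supported and $\int_{B_R}f^{m_2}$ need not be finite a priori, which the absorbing lemma requires; restrict to $r_2\le 15R/16$.)

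The paper sidesteps the absorbing lemma entirely with the power-of-cutoff trick: it sets $\eta=\eta_1^{N}$ with $N=\alpha_0+t_0+p/2=m_2$, so that $|\nabla\eta|^2\le CN^2R^{-2}\eta^{2(N-1)/N}$, then applies H\"older with exponents $m_2/m_1$ and $m_2$ (this is precisely where $N\ge m_2$ is needed so that $\eta^{\frac{2(N-1)}{N}\cdot\frac{m_2}{m_1}}\le\eta^2$), followed by Young with the \emph{fixed} small parameter $\beta_{n,p,q}/2$. Every large quantity then enters raised to the power $m_2$ or $m_1$, so its root is an $(n,p,q)$-constant times a factor like $t_0^{O(1/t_0)}$, which is uniformly bounded. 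If you replace your step one by this absorption (keeping your step two, or collapsing both steps into one pass as the paper does), the rest of your argument goes through.
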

	
	\begin{proof}
		If
		$$
		f\geq  \frac{2 a_{5}t_0^2}{\beta_{n,p,q }R^2},
		$$
		then we can obtain from (\ref{2.34})
		$$
		a_5t_0^2R^{-2} \int_\Omega f^{\alpha_0+\frac{p}{2}+t-1}\eta^2
		\leq
		\frac{\beta_{n,p,q }}{2}\int_\Omega f^{\alpha_0+\frac{p}{2}+t}\eta^2.
		$$
		We denote $\Omega_1 = \{f\geq  \frac{2a_{5}t_0^2}{\beta_{n,p,q }R^2} \}$ and $\Omega_2=\Omega\backslash\Omega_1$. Then, it is not difficult to see
		\begin{align}\label{2.36}
			\begin{split}
				a_5t_0^2R^{-2} \int_\Omega f^{\alpha_0+\frac{p}{2}+t-1}\eta^2
				=&a_5t_0^2R^{-2} \int_{\Omega_1} f^{\alpha_0+\frac{p}{2}+t-1}\eta^2
				+a_5t_0^2R^{-2} \int_{\Omega_2} f^{\alpha_0+\frac{p}{2}+t-1}\eta^2
				\\
				\leq&
				\frac{\beta_{n,p,q }}{2}\int_\Omega f^{\alpha_0+\frac{p}{2}+t}\eta^2+
				\frac{2a_5t_0^2}{R^2} \left(\frac{2a_{5}t_0^2}{\beta_{n,p,q }R^2}\right)^{\alpha_0+\frac{p}{2} +t-1 }V,
			\end{split}
		\end{align}
		where $V$ is the volume of $B_R(o)$. By choosing $t=t_0$ we can obtain from (\ref{2.34}) and (\ref{2.36})
		\begin{align}\label{2.37}
			\begin{split}
				&\frac{\beta_{n,p,q }}{2}\int_\Omega f^{\alpha_0+\frac{p}{2}+t_0}\eta^2+ \frac{a_3}{ t_0 }e^{-t_0}V^{\frac{2}{n}}R^{-2}\left\|f^{\frac{\alpha_0+t_0-1}{2}+\frac{p}{4} }\eta\right\|_{L^{\frac{2n}{n-2}}}^2\\
				\leq &\frac{2a_5t_0^2}{R^2} \left(\frac{2a_{5}t_0^2}{\beta_{n,p,q }R^2}\right)^{\alpha_0+\frac{p}{2} +t_0-1 }V
				+\frac{a_4}{ t_0 }\int_\Omega f^{\alpha_0+\frac{p}{2}+t_0-1}|\nabla\eta|^2.
			\end{split}
		\end{align}
		
		Now, we choose $\eta_1\in C^{\infty}_0(B_R(o))$ satisfying
		$$
		\begin{cases}
			0\leq\eta_1\leq 1,\quad \eta_1\equiv 1\text{ in }  B_{\frac{3R}{4}}(o);\\
			|\nabla\eta_1|\leq\frac{C(n)}{R},
		\end{cases}
		$$
		and let $\eta = \eta_1^{ \alpha_0 + \frac{p}{2}+t_0}$ in \eqref{2.37}. Then, we take a direct calculation to derive
		\begin{align*}
			a_4R^2 |\nabla\eta|^2\leq a_4 C^2(n)\left( t_0+ \frac{p}{2}+\alpha\right )^2\eta ^{\frac{2\alpha_0+2t_0+p-2}{\alpha_0+p/2+t_0}}
			\leq a_{6}t^2_0\eta^{\frac{2\alpha_0+p+2t_0-2}{\alpha_0+p/2+t_0}}.
		\end{align*}
		By H\"older inequality and Young inequality, we have
		\begin{align}\label{2.39}
			\begin{split}
				\frac{a_4}{t_0}\int_{\Omega}f^{\frac{p}{2}+\alpha+t_0-1}|\nabla\eta|^2
				\leq &\frac{a_6t_0}{R^2} \int_{\Omega}f^{\frac{p}{2}+\alpha_0+t_0-1}
				\eta^{\frac{2\alpha_0+p+2t_0-2}{\alpha+p/2+t_0}}\\
				\leq &\frac{a_6t_0}{R^2}  \left(\int_{\Omega}f^{\alpha_0+t_0+ \frac{p}{2} }\eta^2\right)^{\frac{\alpha_0+p/2+t_0-1}{\alpha+p/2+t_0}}V^{\frac{ 1}{\alpha_0+t_0+ p/2 }}\\
				\leq &\frac{\beta_{n,p,q }}{2}\left[\int_{\Omega}f^{ \alpha_0+t_0+\frac{p}{2} }\eta^2 + \left(\frac{2a_{6}t_0 }{\beta_{n,p,q }R^2}\right)^{ \alpha_0+t_0+p/2 }V\right].
			\end{split}
		\end{align}
		By combining (\ref{2.37}) and (\ref{2.39}) we obtain
		\begin{align}\label{2.40}
			\begin{split}
				&\left(\int_{\Omega}f^{\frac{n(p/2+\alpha_0+t_0-1)}{n-2}}\eta^{\frac{2n}{n-2}}\right)^{\frac{n-2}{n}}\\
				\leq &\frac{t_0}{a_3} e^{t_0}V^{1-\frac{2}{n}}R^2\left[\frac{2a_5t_0^2}{R^2} \left(\frac{2a_{5}t_0^2}{\beta_{n,p,q }R^2}\right)^{t_0+\frac{p}{2}+\alpha_0-1} + \frac{a_{6}t_0}{R^2} \left(\frac{2a_{6}t_0 }{\beta_{n,p,q }R^2}\right)^{\alpha_0+t_0+\frac{p}{2} -1 }\right]\\
				\leq &a_7^{\alpha_0+t_0+\frac{p}{2} -1}e^{t_0}V^{1-\frac{2}{n}}t_0^3\left( \frac{t_0^2}{ R^2}\right)^{\alpha_0+t_0+\frac{p}{2} -1},
			\end{split}
		\end{align}
		where $a_7$ depending only on $n$, $p$ and $q$ satisfies
		$$
		a_7^{\alpha_0+t_0+\frac{p}{2} -1} \geq \frac{2a_5}{a_3}\left(\frac{2a_5}{\beta_{n,p,q }}\right)^{\alpha_0+t_0+\frac{p}{2}-1}
		+\frac{a_6}{a_3t_0}\left(\frac{2a_6}{\beta_{n,p,q }t_0}\right)^{\alpha_0+t_0+\frac{p}{2}-1}.
		$$
		Thus
		\begin{align*}
			\left\|f\eta^{\frac{2}{\alpha_0+t_0+p/2-1}}\right\|_{L^{\beta}(\Omega)}\leq a_7e^{\frac{t_0}{\alpha_0+t_0+\frac{p}{2} -1}}V^{1/\beta}
			t_0^{\frac{3}{\alpha_0+t_0+p/2-1}}\frac{t_0^2}{ R^2}\leq a_8V^{\frac{1}{\beta}} \frac{t_0^2}{ R^2},
		\end{align*}
		where $a_8$ depending only on $n,p,q$ satisfies
		$$
		a_8 = a_7 \sup_{t_0\geq 1}t_0^{\frac{3}{\alpha_0+t_0+p/2-1}}e^{\frac{t_0}{\alpha_0+t_0+\frac{p}{2} -1}}.
		$$
		Since $\eta\equiv1$ in $B_{3R/4}$, we obtain that
		$$
		\|f \|_{L^{\beta}(B_{3R/4}(o))}\leq a_8V^{\frac{1}{\beta}} \frac{t_0^2}{ R^2}.
		$$
		Thus, we complete the proof of this lemma.
	\end{proof}

	\subsection{Moser iteration}\label{sec3.4}
	\begin{lem}\label{bound}
		Let $(M,g)$ be a complete manifold with $Ric\geq-(n-1)\kappa$. Assume $u$ is a solution to equation \eqref{equ21} on the geodesic ball $B(o,R)\subset M$ and $f=|\nabla u|^2$. Then there exists $a_{11} = a_{11}(n,p,q)>0$ such that
		\begin{align*}
			\|f\|_{L^{\infty}(B_{R/2}(o))}\leq & a_{11}\frac{(1+\sqrt{\kappa}R)^2}{R^2}.
		\end{align*}
	\end{lem}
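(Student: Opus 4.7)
The plan is a standard De Giorgi--Moser iteration, using the integral inequality of Section 3.2 as a reverse Sobolev inequality and \lemref{lpbound} as the seed. Set $\chi=n/(n-2)$, $s_0=\alpha_0+p/2+t_0-1$, and $s_k=s_0\chi^k$ for $k\ge 0$; note that $s_1=s_0\chi=\beta$, so \lemref{lpbound} already supplies the $L^{s_1}$-bound on $B_{3R/4}(o)$. Pick a decreasing sequence of radii $r_k=R/2+R/2^{k+2}$, so $r_0=3R/4$ and $r_k\downarrow R/2$, and standard cutoffs $\eta_k\in C^\infty_0(B_{r_k})$ with $\eta_k\equiv 1$ on $B_{r_{k+1}}$ and $|\nabla\eta_k|\le C(n)\,2^k/R$.

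For each $k\ge 1$, apply the integral inequality of Section 3.2 with $t=t_k:=s_k-\alpha_0-p/2+1$ (which satisfies $t_k\ge t_0$ because $s_k\ge s_0$) and $\eta=\eta_k$. Discarding the nonnegative bulk term on the left and substituting $|\nabla\eta_k|^2\le C\cdot 4^k/R^2$, one obtains an estimate of the form
\begin{equation*}
\bigl\|f^{s_k/2}\bigr\|_{L^{2\chi}(B_{r_{k+1}})}^{2}\le \widetilde C_k\int_{B_{r_k}}f^{s_k},
\qquad
\widetilde C_k=\frac{t_k\,e^{t_0}}{a_3}\,V^{-2/n}\!\left[a_5 t_0^2+\frac{C\,a_4\,4^k}{t_k}\right].
\end{equation*}
Taking $s_k$-th roots yields the classical Moser step
\begin{equation*}
\|f\|_{L^{s_{k+1}}(B_{r_{k+1}})}\le \widetilde C_k^{1/s_k}\,\|f\|_{L^{s_k}(B_{r_k})}.
\end{equation*}
Iterating from $k=1$ and sending $k\to\infty$ gives
\begin{equation*}
\|f\|_{L^\infty(B_{R/2})}\le\Bigl(\prod_{k\ge 1}\widetilde C_k^{1/s_k}\Bigr)\|f\|_{L^\beta(B_{3R/4})}.
\end{equation*}

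The key algebraic fact is $\sum_{k\ge 1}1/s_k=1/s_1\cdot\chi/(\chi-1)=n/(2\beta)$. This forces the volume factor in the product to collapse to $V^{-(2/n)\cdot n/(2\beta)}=V^{-1/\beta}$, which exactly cancels the $V^{1/\beta}$ in the $L^\beta$-bound from \lemref{lpbound}. One checks that the remaining pieces are uniformly bounded by a constant depending only on $n,p,q$: the factor $\prod e^{t_0/s_k}=\exp(t_0 n/(2\beta))$ is bounded because $t_0\le s_0\le\beta$; the polynomial factor $\prod\bigl(t_k(t_0^2+4^k/t_k)\bigr)^{1/s_k}$ converges by the geometric estimates $\sum k/s_k<\infty$ and $\sum(\log t_k)/s_k<\infty$. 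Combining everything gives
\begin{equation*}
\|f\|_{L^\infty(B_{R/2})}\le a_{11}\frac{t_0^{2}}{R^2}=a_{11}\,c_1^2(n,p,q)\frac{(1+\sqrt{\kappa}R)^2}{R^2},
\end{equation*}
as claimed.

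The main obstacle I anticipate is bookkeeping rather than conceptual: one must verify carefully that every $V$-power, every instance of $e^{t_0}$ (from Saloff-Coste), and every $4^k$ from the cutoff gradients are tamed when raised to $1/s_k$ and summed, and that the matching $\sum 1/s_k=n/(2\beta)$ produces the precise cancellation with the $V^{1/\beta}$ in \lemref{lpbound}. This cancellation is what makes the final bound free of the volume $V$ and reduces the $(1+\sqrt{\kappa}R)$-dependence to the factor $t_0^2$ already present in \lemref{lpbound}.
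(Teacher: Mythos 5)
Your proposal is correct and follows essentially the same route as the paper: discard the bulk term in the integral inequality of Section 3.2, turn the Saloff-Coste Sobolev step into a reverse H\"older inequality on shrinking balls from $3R/4$ down to $R/2$ with exponents multiplied by $n/(n-2)$, and iterate, using $\sum 1/s_k=n/(2\beta)$ so that the volume powers collapse to $V^{-1/\beta}$ and cancel against the $V^{1/\beta}$ in the $L^\beta$ seed bound of \lemref{lpbound}. The only differences (radii decaying like $2^{-k}$ rather than $4^{-k}$, and starting the index at $k=1$ with $s_1=\beta$) are cosmetic.
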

	
	\begin{proof}
		We discard the first term of the left hand side of (\ref{2.34}) to obtain
		\begin{align}\label{2.42}
			\frac{a_3}{ t }e^{-t_0}V^{\frac{2}{n}}R^{-2}\left\|f^{\frac{\alpha_0+t-1}{2}+\frac{p}{4} }\eta\right\|_{L^{\frac{2n}{n-2}}}^2
			\leq a_5t_0^2R^{-2} \int_\Omega f^{\alpha_0+\frac{p}{2}+t-1}\eta^2+\frac{a_4}{ t }\int_\Omega f^{\alpha_0+\frac{p}{2}+t-1}|\nabla\eta|^2.
		\end{align}
		Let $r_k = \frac{R}{2}+\frac{R}{4^k}$ and $\Omega_k = B_{r_k}(o)$. It is easy to see that there exist cut-off functions $\eta_k\in C^{\infty}(\Omega_k )$ satisfying
		\begin{align*}
			\begin{cases}
				0\leq \eta_k\leq1, \quad 	|\nabla\eta_k|\leq \frac{C4^k}{R};\\
				\eta_k\equiv 1\text{ in }B_{r_{k+1}}(o),
			\end{cases}
		\end{align*}
		where $k=1,\, 2,\, 3,\, \cdots$. Substituting $\eta_k$ into (\ref{2.42}) instead of $\eta$, we arrive at
		\begin{align}\label{add}\nonumber
			a_3e^{-t_0}V^{\frac{2}{n}} \left\|f^{\frac{\alpha_0+t-1}{2}+\frac{p}{4} }\eta_k\right\|_{L^{\frac{2n}{n-2}}(\Omega_k)}^2
			\leq  & a_5t_0^2t\int_{\Omega_k} f^{\alpha_0+\frac{p}{2}+t-1}\eta_k^2+ a_4R^2\int_{\Omega_k} f^{\alpha_0+\frac{p}{2}+t-1}\left|\nabla\eta_k\right|^2\\
			\leq & \left(a_5t_0^2t + C^216^k\right)\int_{\Omega_k} f^{\alpha_0+\frac{p}{2}+t-1}.
		\end{align}
		By picking $\beta_1=\beta$ and $\beta_{k+1}=\frac{n\beta_k}{n-2}$, and letting $t=t_k$ such that $$t_k+\frac{p}{2}+\alpha-1=\beta_k,$$
		we can deduce from \eqref{add} that
		\begin{align*}
			a_3 \left(\int_{\Omega_k}f^{\beta_{k+1}}\eta_k^\frac{2n}{n-2}\right)^{\frac{n-2}{n}}
			\leq & e^{ t_0}V^{-\frac{2}{n}}\left(a_5t_0^2\left(t_0+\frac{p}{2}+\alpha_0-1\right)\left(\frac{n}{n-2}\right)^k + C^216^k\right)\int_{\Omega_k} f^{\beta_k},
		\end{align*}
		where $k=1,\,2,\, 3\,\cdots$.
		
		On the other hand, we can choose $a_9=a_9(n,p,q)$ which satisfies
		$$
		a_9t_0^3\geq \max\left\{a_5t_0^2\left(\alpha_0+t_0+\frac{p}{2}-1\right),\, C^2 \right\},
		$$
		since $\frac{n}{n-2}<16$. Then we have
		\begin{align}\label{eq:2.45}
			a_3 \left(\int_{\Omega_k}f^{\beta_{k+1}}\eta_k^\frac{2n}{n-2}\right)^{\frac{n-2}{n}}
			\leq &2a_9t_0^3 e^{ t_0}V^{-\frac{2}{n}} 16^k \int_{\Omega_k} f^{\beta_k}.
		\end{align}
		Taking $\frac{1}{\beta_k}$ power of the both sides of (\ref{eq:2.45}), we obtain
		\begin{align*}
			\|f\|_{L^{\beta_{k+1}}(\Omega_{k+1})}
			\leq &\left(2a_9t_0^3 e^{ t_0}V^{-\frac{2}{n}}\right)^{\frac{1}{\beta_k}} 16 ^{\frac{k}{\beta_k}}\|f\|_{L^{\beta_k}(\Omega_k)}.
		\end{align*}
		Noting
		$$\sum_{k=1}^{\infty}\frac{1}{\beta_k} = \frac{\frac{1}{\beta_1}}{ 1-\frac{n-2}{n}} =\frac{n}{2\beta_1}\quad\mbox{and}\quad \sum_{k=1}^{\infty}\frac{k}{\beta_k} <\infty,$$
		we can derive
		\begin{align*}
			\|f\|_{L^{\infty}(B_{R/2}(o))}\leq &a_{10}  V^{-\frac{1}{\beta}} \|f\|_{L^{\beta}(B_{3R/4}(o))},
		\end{align*}
		where $a_{10}$ depending only on $n,p,q$ satisfies
		$$
		a_{10} \geq \left(2 a_9t_0^3 e^{ t_0} \right)^{\frac{n}{2\beta_1 }} 16 ^{\sum_{k=1}^{\infty}\frac{k}{\beta_k}}.
		$$
		In view of (\ref{lpbpund}), we obtain
		\begin{align*}
			\|f\|_{L^{\infty}(B_{R/2}(o))}\leq & a_{11}\frac{(1+\sqrt{\kappa}R)^2}{R^2},
		\end{align*}
		where $a_{11} = a_{10}a_8c_1(n,p,q )$.
	\end{proof}
	
	Recalling $u = -(p-1)\log v$ where $v$ is a positive solution to \eqref{equ0}, actually we can conclude from \lemref{bound} the following
	\begin{thm}[=\thmref{thm1}]
		Let $(M,g)$ be an complete Riemannian manifold with $\ric_g\geq-(n-1)\kappa g$, where $\kappa$ is a non-positive constant.
		Assume $a$, $q$ and $p\ (p>1)$ satisfy one of the following two conditions,
		\begin{enumerate}
			\item $$a\left(\frac{n+1}{n-1}-\frac{q}{p-1}\right)\geq0,$$
			
			\item
			$$p-1<q<\frac{n+3}{n-1}(p-1).$$
		\end{enumerate}
		Then for any positive solution of $\Delta_pv +av^q=0$ on a geodesic ball $B_R(o)\subset M$, we have
		$$
		\sup_{B_{\frac{R}{2}}(o)} \frac{|\nabla v|^2}{v^2}\leq c(n,p,q)\frac{(1+\sqrt\kappa R)^2}{R^2}.
		$$
	\end{thm}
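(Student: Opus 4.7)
The plan is to deduce this theorem immediately from the $L^\infty$ estimate for $f=|\nabla u|^2$ established via Moser iteration in the previous subsection, by undoing the logarithmic substitution $u=-(p-1)\log v$ that was introduced at the start of Section 2.

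First I would verify that this substitution is legitimate: since $v$ is a positive solution of \eqref{equ0} on $B_R(o)$, the function $u=-(p-1)\log v$ is well-defined and inherits the regularity $C^1\cap W^{1,p}_{\mathrm{loc}}$, and a direct chain-rule computation (the one that produces \eqref{equ21}) shows that $u$ is a weak solution of $\Delta_p u - |\nabla u|^p - be^{cu}=0$ on the same ball, with $b=a(p-1)^{p-1}$ and $c=(p-q-1)/(p-1)$. The hypotheses (1) and (2) on $(a,p,q)$ in the present theorem are exactly those under which the pointwise inequality for $\mathcal{L}(f^{\alpha_0})$ was proved, so every subsequent lemma in Section 3 is applicable.

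Second, I would compare the two gradients. From $\nabla u=-(p-1)\nabla v/v$ one reads off
\[
f := |\nabla u|^2 = (p-1)^2\,\frac{|\nabla v|^2}{v^2},
\]
so an $L^\infty$ bound for $f$ on $B_{R/2}(o)$ is, up to the fixed factor $(p-1)^2$, an $L^\infty$ bound for $|\nabla v|^2/v^2$ on the same ball. Applying the $L^\infty$ bound of the preceding subsection yields
\[
\|f\|_{L^\infty(B_{R/2}(o))} \leq a_{11}(n,p,q)\,\frac{(1+\sqrt{\kappa}R)^2}{R^2},
\]
and dividing by $(p-1)^2$ and renaming the constant as $c(n,p,q)$ gives the claimed gradient estimate.

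The only genuine difficulty of the overall program is already behind us, so the present step has no real obstacle. That difficulty was the sharp pointwise linearization inequality under hypothesis (2): one had to select $\alpha_0=\alpha_0(n,p,q)$ large enough so that
\[
\beta_{n,p,q,\alpha_0} = \frac{1}{n-1}-\left(\frac{n+1}{n-1}-\frac{q}{p-1}\right)^2\frac{(2\alpha_0-1)(n-1)+p-1}{4(2\alpha_0-1)} > 0,
\]
which is precisely what makes the good $f^2$ term survive after absorbing the nonlinear contribution $be^{cu}$ via the standard $a^2-2ab\geq -b^2$ trick. Once $\beta_{n,p,q}>0$ is secured, testing \eqref{equa3.14} against $f_\epsilon^{t}\eta^2$, invoking Saloff-Coste's Sobolev inequality, extracting an $L^\beta$ bound on $B_{3R/4}(o)$, and then running the Moser iteration on a nested family of cutoffs down to $B_{R/2}(o)$ are all routine, and the final theorem now follows as an immediate corollary.
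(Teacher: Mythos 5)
Your proposal is correct and follows essentially the same route as the paper: the theorem is obtained by combining the $L^\infty$ bound for $f=|\nabla u|^2$ from the Moser iteration lemma with the identity $f=(p-1)^2|\nabla v|^2/v^2$ coming from the substitution $u=-(p-1)\log v$. Your summary of the preceding steps (choice of $\alpha_0$ so that $\beta_{n,p,q,\alpha_0}>0$, the test-function argument, Saloff-Coste's inequality, the $L^\beta$ bound on $B_{3R/4}$, and the iteration) also matches the paper's development.
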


	\begin{proof}[\bf Proof of  \corref{cor2}.]
		When $a>0$, the union of the range of $q$ in condition (\ref{cond1}) and the range of $q$ in (\ref{cond2}) is
		$$
		q<\frac{n+3}{n-1}(p-1) .
		$$
		When $a<0$, the union of the ranges of $q$ in condition (\ref{cond1}) and (\ref{cond2}) is
		$$
		q>p-1.
		$$
		Hence, \corref{cor2} can be directly deduced from \thmref{thm1}.
	\end{proof}
	
	\begin{proof}[\bf Proof of \thmref{thm2}.]
		Choosing $\kappa=0$ in \thmref{thm1} implies
		\begin{align}
			\label{2.48}
			\sup_{B_{R/2}(o)} \frac{|\nabla v|}{v}
			\leq &
			c(n,p,q)\frac{1}{R}.
		\end{align}
		By letting $R\to \infty$ in \eqref{2.48}, we obtain
		$$
		\nabla v=0.
		$$
		Hence $v$ is a constant and $ \Delta_pv=0$. This contradicts to equation \eqref{equ0} since $v$ is positive. We complete the proof.
	\end{proof}
	
	Next, we provide the proof of \thmref{thm1.6}.
	\begin{proof}[\bf Proof of \thmref{thm1.6}.]
		By \thmref{thm1}, for any $p\in M$ we have
		\begin{align*}
			|\nabla u(p)|\leq \sup_{B_{\frac{R}{2}}(p)}|\nabla u|
			\leq & c(n,p,q)\frac{ 1+\sqrt{\kappa}R }{R }.
		\end{align*}
		Letting $R\to\infty$, we obtain that
		$$
		|\nabla u(p)|\leq c(n,p,q)\sqrt{\kappa}, \quad \forall p\in M.
		$$
		Fix $x_0\in M$, for any $x\in M$, choose a minimizing geodesic $\gamma(t)$ connecting $x_0$ and $x$:
		$$
		\gamma:[0,d]\to M,\quad\gamma(0)=x_0, \quad \gamma(d)=x,
		$$
		where $d=dist(x, x_0)$ is the distance of $x_0$ and $x$. So we have
		\begin{align}\label{3.49}
			u(x)-u(x_0)=\int_0^d\frac{d}{dt}u\circ\gamma(t)dt.
		\end{align}
		Since
		\begin{align}\label{3.50}
			\left|\frac{d}{dt}u\circ\gamma(t)\right|\leq |\nabla u||\gamma'(t)| = c(n,p,q)\sqrt{\kappa},
		\end{align}
		it follows from (\ref{3.49}) and (\ref{3.50}) that
		\begin{align*}
			u(x_0)-c(n,p,q)\sqrt{\kappa}d\leq  u(x)\leq u(x_0)+c(n,p,q)\sqrt{\kappa}d.
		\end{align*}
		As $u = -(p-1)\ln v$, we can derive the required inequality. Thus we finish the proof.
	\end{proof}

	{\bf Acknowledgments:} Y. Wang is supported partially by National Natural Science Foundation of China (Grant No.11971400); G. Wei is supported by National Natural Science Foundation of China (Grants No.12101619 and 12141106).
	\bigskip
	
	\bibliographystyle{acm}

\end{document}